\title[Bifurcation and Hausdorff dimension in families of chaotically driven maps]{Bifurcation and Hausdorff dimension in families of chaotically driven maps with multiplicative forcing}
\author{Gerhard Keller}
\address{Department Mathematik, Universität Erlangen-Nürnberg, Cauerstr. 11, 91058 Erlangen, Germany}
\email{keller@mi.uni-erlangen.de}
\thanks{This work is funded by DFG grant Ke 514/8-1. The authors thank B. Saussol and J. Schmeling for valuable help on some questions concerning the multifractal analysis of Birkhoff averages.}
\author{Atsuya Otani}
\date{\today}
\email{otani@mi.uni-erlangen.de}
\subjclass[2010]{37D20, 37D35, 37G35, 37H20}
\keywords{Skew product, global attractor, strange invariant graph, bifurcation, Hausdorff dimension}
\newcommand{\assign}{:=}
\newcommand{\cdummy}{\cdot}
\newcommand{\longrightarrowlim}{\mathop{\longrightarrow}\limits}
\newcommand{\nocomma}{}
\newcommand{\nosymbol}{}
\newcommand{\tmop}[1]{\ensuremath{\operatorname{#1}}}
\newcommand{\fs}{\,.}
\newcommand{\com}{\,,}
\newtheorem{corollary}{Corollary}
\newtheorem{lemma}{Lemma}
\newtheorem{theorem}{Theorem}
\newtheorem{proposition}{Proposition}
\theoremstyle{definition}
\newtheorem{remark}{Remark}
\newcommand{\R}{\mathbbm{R}}
\newcommand{\N}{\mathbbm{N}}
\begin{document}

\begin{abstract}
We study bifurcations of invariant graphs in skew product dynamical systems driven by hyperbolic surface maps $T$ like Anosov surface diffeomorphisms or baker maps and with one-dimensional concave fibre maps under multiplicative forcing when the forcing is scaled by a parameter $r=e^{-t}$. For a range of parameters two invariant graphs (a trivial and a non-trivial one) coexist, and we use thermodynamic formalism to characterize the parameter dependence of the Hausdorff and packing dimension of the set of points where both graphs coincide.
As a corollary we characterize the parameter dependence of the dimension of the global attractor $\mathcal{A}_t$: Hausdorff and packing dimension have a common value $\dim(\mathcal{A}_t)$, and there is a critical parameter $\gamma_c^-$ determined by the SRB measure of $T^{-1}$ such that 
$\dim(\mathcal{A}_t)=3$ for $t\leqslant\gamma_c^-$ and $t\mapsto\dim(\mathcal{A}_t)$ is strictly decreasing for $t\in[\gamma_c^-,\gamma_{\max})$.
\end{abstract}

\maketitle

\section{The general setting and a review of main results}\label{sect:pre}

In this paper we study bifurcations in skew product dynamical systems driven by a basis dynamical system $(\Theta, \mathcal{B}, T)$, where $( \Theta,
\mathcal{B})$ is a measurable space and $T : \Theta \rightarrow \Theta$ a
bi-measurable map. We denote the set of $T$-invariant probability measures and
its subset of ergodic measures by $\mathcal{P}_T(\Theta)$ and
$\mathcal{E}_T( \Theta)$, respectively. For the sake of simplicity, we
will use the notation $\mu \left( \psi \right) \assign \int_\Theta \psi\, d \mu$ for $\mu \in \mathcal{P}_T ( \Theta )$ and $\psi \in C ( \Theta ; \mathbbm{R})$. We also denote $\mathbbm{R}_{\geqslant} \assign [0,\infty)$.

\subsection{The skew product system}

For each parameter $t \in \mathbbm{\mathbbm{R}}$ 
we define a skew-product transformation
\[ T_t : \Theta \times \mathbbm{R}_{\geqslant} \rightarrow \Theta \times \mathbbm{R}_{\geqslant},
\quad T_t \left( \theta, x \right) \assign \left( T  \theta,
f_t \left( \theta, x \right)  \right),
\]
with a fibre function
\[ f_t : \Theta \times \mathbbm{R}_{\geqslant} \rightarrow \mathbbm{R}_{\geqslant},
\quad f_t  (\theta, x) \assign e^{- t} g (\theta) h (x)
\]
where
\begin{enumerate}[\quad$\triangleright$]
  \item $h \in C^1 ( \mathbbm{\mathbbm{R}}_{\geqslant} ; \mathbbm{R})$ is strictly concave with $h (0) =
  0$, $h' (x) > 0 $ for $x > 0$, $h' (0) = 1$, and $\lim_{x\to\infty}\frac{h(x)}{x}=0$,
  
  \item $g:\Theta\to(0,\infty)$ is bounded and measurable.\end{enumerate}
For $n \geqslant 2$ we
define iteratively
$f_t^n: \Theta \times \mathbbm{R}_{\geqslant} \rightarrow\mathbbm{R}_{\geqslant}$ and
$T_t : \Theta \times \mathbbm{R}_{\geqslant} \rightarrow \Theta \times \mathbbm{R}_{\geqslant}$,
\[ 
f_t^n \left( \theta, x \right) \assign f_t \left( T^{n - 1} \theta, f_t^{n - 1}
   \left( \theta,x \right) \right) \hspace{1em} \text{\tmop{and}} \hspace{1em}
   T_t^n \left( \theta, x \right) \assign T_t \left( T_t^{n - 1} \left( \theta, x
   \right) \right)\fs  \]
\begin{remark}
  \label{f_calculation}The following properties are easily verified:
  \begin{enumerate}[a)]
    \item $T_t^n \left( \theta, x \right) = \left( T^n \theta, f_t^n \left( \theta,x
    \right) \right)$ for $\left( \theta, x \right) \in \Theta \times
    \mathbbm{R}_{\geqslant}$ and $_{} n \in \mathbbm{N}_0$.
    
    \item $\frac{d}{d x} f_t^n \left( \theta, x \right) > 0$ and $\frac{d^2}{d
    x^2} f_t^n \left( \theta, x \right) < 0$ for all $\left( \theta, x \right)
    \in \Theta \times ( 0 , \infty ) $ and $n \in \mathbbm{N}$.
    \item For each $t\in\R$ there is $M_t>0$ such that 
    $f_t( \theta, M_t)<M_t$ for all $\theta\in\Theta$.
  \end{enumerate}
\end{remark}

\subsection{The maximal invariant function $\varphi_t$ and its zero set $N_t$}

A function $\varphi \nosymbol : \Theta \rightarrow \mathbbm{R}$ is invariant (or
more precisely $T_t$-invariant), if
\[ T_t \left( \theta, \varphi ( \theta ) \right) = \left( T \theta,
   \varphi ( T \theta ) \right) \quad
   \text{or, equivalently,}\quad f_t \left( \theta, \varphi(\theta) \right) = \varphi \left( T \theta \right), \]
for all $\theta \in \Theta$. The function $\varphi\equiv0$ is always invariant. We call its graph $\Phi_{\text{base}}=\{(\theta,0):\theta\in\Theta\}$ the \emph{baseline} of the skew product system.

Since our fibre maps are monotone and strictly concave, this skew-product system possesses at most two essentially different measurable invariant functions, among them the maximal one, as the following lemma shows.

\begin{lemma}  \label{max_invariant_function}
\begin{enumerate}[a)]
\item For $t \in \mathbbm{R}$ the \emph{maximal
  $T_t$-invariant function} $\varphi_t : \Theta \rightarrow \mathbbm{R}_\geqslant$,
  \begin{equation}\label{eq:varphi-def}
\varphi_t ( \theta) \assign\lim_{n \rightarrow \infty} \psi_{t, n}
     ( \theta ) = \inf_n \psi_{t, n} ( \theta) \com
  \end{equation}
  is well defined
  where 
  $\psi_{t, n} \left( \theta \right) \assign f_t^n \left( T^{- n} \theta, M_t
  \right)$. It is indeed maximal, i.e. for every $T_t$-invariant function $\varphi$ we have
  that $0\leqslant\varphi \left( \theta \right) \leqslant \varphi_t \left( \theta
  \right)$ for all $\theta \in \Theta$. Its graph is denoted by
  $\Phi_t \assign \left\{ \left( \theta, \varphi_t \left( \theta \right)
  \right) : \theta \in \Theta \right\}$.
  \item
  Let $\varphi$ be a measurable $T_t$-invariant function. Then we have for every
  $\mu \in \mathcal{E}_T ( \Theta)$
\begin{equation}
\varphi=0\text{ $\mu$-a.e.\quad or\quad }\varphi=\varphi_t\text{ $\mu$-a.e.}
\end{equation}
\end{enumerate}
\end{lemma}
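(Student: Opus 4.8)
The plan is to prove both parts by exploiting monotonicity and strict concavity of the fibre maps together with the contraction property in Remark~\ref{f_calculation}c).

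For part a), I would first check that $\psi_{t,n}$ is well defined and decreasing in $n$. Fix $t$ and the constant $M_t$ from Remark~\ref{f_calculation}c). Since $f_t(\theta,M_t)<M_t$ for all $\theta$, and since $x\mapsto f_t(\theta,x)$ is increasing (Remark~\ref{f_calculation}b)), an easy induction gives $f_t^n(\theta,M_t)\leqslant M_t$ and more precisely $\psi_{t,n+1}(\theta)=f_t^n(T^{-n}\theta,\,f_t(T^{-(n+1)}\theta,M_t))\leqslant f_t^n(T^{-n}\theta,M_t)=\psi_{t,n}(\theta)$, using monotonicity of $f_t^n$ in the last variable. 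Hence the limit $\varphi_t(\theta)=\inf_n\psi_{t,n}(\theta)\in[0,M_t]$ exists and is measurable as a pointwise infimum of measurable functions. Invariance follows by continuity of $f_t(\theta,\cdot)$: $f_t(\theta,\varphi_t(\theta))=\lim_n f_t(\theta,\psi_{t,n}(\theta))=\lim_n f_t^{n+1}(T^{-n}\theta,M_t)=\lim_n\psi_{t,n+1}(T\theta)=\varphi_t(T\theta)$. For maximality, let $\varphi$ be any $T_t$-invariant function with $0\leqslant\varphi$. Then $\varphi(\theta)=f_t^n(T^{-n}\theta,\varphi(T^{-n}\theta))$. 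I would like to bound $\varphi(T^{-n}\theta)\leqslant M_t$; this needs a short argument, e.g. that the trajectory of any point eventually enters $[0,M_t]$ (from $f_t(\theta,x)=e^{-t}g(\theta)h(x)$ and $h(x)/x\to0$ one gets $f_t(\theta,x)\leqslant M_t$ for $x$ large, hence $f_t^n(\cdot,x)$ decreases into $[0,M_t]$), and then monotonicity of $f_t^n$ in the last variable gives $\varphi(\theta)\leqslant f_t^n(T^{-n}\theta,M_t)=\psi_{t,n}(\theta)$, so $\varphi\leqslant\varphi_t$.

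For part b), fix $\mu\in\mathcal{E}_T(\Theta)$ and a measurable $T_t$-invariant $\varphi$. The idea is to compare $\varphi$ with $\varphi_t$ along orbits using strict concavity. Consider the set $Z=\{\theta:\varphi(\theta)=0\}$; since $\varphi(T\theta)=f_t(\theta,\varphi(\theta))$ and $f_t(\theta,0)=0$ while $f_t(\theta,x)>0$ for $x>0$, the set $Z$ is $T$-invariant, so by ergodicity $\mu(Z)\in\{0,1\}$. If $\mu(Z)=1$ we are in the first alternative. So assume $\varphi>0$ $\mu$-a.e. I would then show $\varphi=\varphi_t$ $\mu$-a.e. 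Suppose not; then on a positive-measure set $\varphi(\theta)<\varphi_t(\theta)$ (we already know $\varphi\leqslant\varphi_t$). For such $\theta$, write $\varphi(\theta)=\lambda(\theta)\varphi_t(\theta)$ with $\lambda(\theta)\in(0,1)$. Strict concavity of $f_t(\theta,\cdot)$ with $f_t(\theta,0)=0$ gives the strict super-multiplicativity $f_t(\theta,\lambda x)>\lambda f_t(\theta,x)$ for $\lambda\in(0,1)$, $x>0$; applying this with $x=\varphi_t(\theta)$ yields $\varphi(T\theta)=f_t(\theta,\lambda(\theta)\varphi_t(\theta))>\lambda(\theta)\varphi_t(T\theta)$, i.e. $\lambda(T\theta)>\lambda(\theta)$ whenever $\lambda(\theta)<1$. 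Thus $\theta\mapsto\lambda(\theta):=\varphi(\theta)/\varphi_t(\theta)\in(0,1]$ satisfies $\lambda\circ T\geqslant\lambda$ with strict inequality on $\{\lambda<1\}$; by Poincaré recurrence / ergodic invariance of $\mu$, an a.e.-defined measurable function that is nondecreasing along the dynamics must be $T$-invariant $\mu$-a.e., forcing $\{\lambda<1\}$ to be $\mu$-null. Hence $\varphi=\varphi_t$ $\mu$-a.e.

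The main obstacle I anticipate is making the "nondecreasing along an ergodic transformation implies invariant" step fully rigorous while handling the possibility $\varphi_t(\theta)=0$ on a nontrivial set (where $\lambda$ is not defined) and controlling integrability: $\log\lambda$ need not be integrable, so rather than integrating $\lambda\circ T-\lambda$ directly I would argue via the sets $\{\lambda\leqslant c\}$ for $c\in(0,1)$, each of which is forward-invariant up to $\mu$-null sets and hence has measure $0$ or $1$, and push $c\uparrow1$. Care is also needed to first reduce to the set where $\varphi_t>0$, which is itself $T$-invariant (by the same $f_t(\theta,0)=0$ argument applied to $\varphi_t$), so ergodicity splits the analysis cleanly into the case $\varphi_t=0$ $\mu$-a.e. (then trivially $\varphi=\varphi_t=0$) and the case $\varphi_t>0$ $\mu$-a.e., where the concavity argument above applies.
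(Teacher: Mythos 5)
The paper gives no self-contained proof here (it cites Keller 1996, pp.\ 144--145 for part a) and Lemma~1 there for part b)), so your argument has to stand on its own. Your overall strategy is the standard one and is almost certainly that of the cited reference: the monotone pullback and monotonicity in the fibre for part a), and for part b) the ratio $\lambda=\varphi/\varphi_t$, its strict increase along orbits by strict concavity with $f_t(\theta,0)=0$, and integration against $\mu$. Part b) as you sketch it is sound; the reduction to the $T$-invariant set $\{\varphi_t>0\}$ and the a.e.\ argument are correct. (One minor slip: $\{\lambda\leqslant c\}$ is backward- rather than forward-invariant mod $\mu$-null sets, but this changes nothing, since ergodicity forces measure $0$ or $1$ in either case.)

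There is, however, a real gap in the maximality step of part a). You need $\varphi(T^{-n}\theta)\leqslant M_t$ and propose to obtain it from the observation that ``the trajectory of any point eventually enters $[0,M_t]$''. That observation concerns the \emph{forward} orbit $f_t^n(\theta',x)$; what you must control is the \emph{backward} orbit $\varphi(T^{-n}\theta)$, and these need not be bounded. The estimate $f_t(\theta,x)\leqslant rx$ (some $r<1$) for $x\geqslant M_t$ makes forward iterates decrease, but it makes backward preimages grow. Indeed, if $h$ is unbounded (the hypotheses only require $h(x)/x\to 0$), there exist unbounded $T_t$-invariant $\varphi$ with $\varphi>\varphi_t$: take $\Theta=\Z$, $T={+}1$, $g$ a constant with $e^{-t}g>1$, let $\varphi(0)$ be larger than the positive fixed point of $x\mapsto e^{-t}g\,h(x)$, and solve $e^{-t}g\,h(\varphi(n-1))=\varphi(n)$ backwards in $n$ (possible since $h$ is then a bijection of $[0,\infty)$). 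So the maximality assertion needs the additional input that $\varphi$ is bounded (automatic if $h$ is bounded, or if $\Theta$ is compact and $\varphi$ continuous), and you should state this. Once $\varphi\leqslant K$ is known the fix is quick: choose $m$ with $r^mK\leqslant M_t$; then for $n\geqslant m$ one has $f_t^m(T^{-n}\theta,\varphi(T^{-n}\theta))\leqslant M_t$, hence $\varphi(\theta)=f_t^{n-m}\bigl(T^{-(n-m)}\theta,f_t^m(T^{-n}\theta,\varphi(T^{-n}\theta))\bigr)\leqslant\psi_{t,n-m}(\theta)$, and $n\to\infty$ yields $\varphi\leqslant\varphi_t$. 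For part b) the issue is harmless: for measurable $\varphi$, each $\{\varphi\leqslant K\}$ with $K\geqslant M_t$ is forward-invariant and these sets exhaust $\Theta$, so ergodicity gives $\varphi\leqslant M_t$ $\mu$-a.e., which is all that part b) requires.
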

The proof of part a) of this lemma is identical to the one in \cite[pp.144-145]{Keller1996}, while part b) is contained in \cite[Lemma 1]{Keller1996}. 
Observe that in that reference the base system is an irrational rotation on $\mathbb{T}^1$, but only the invertibility and the ergodicity of the invariant (Lebesgue) measure are  used for the proofs.

Depending on the stability properties of the fibre maps at $x=0$ relative to a measure $\mu\in\mathcal{E}_T ( \Theta)$, the maximal invariant function $\varphi_t$ may be identical to zero, 
strictly positive, or - and this is the most interesting case - it may have zeros without being identical to zero. 

In this note we describe the measure theoretical and topological properties of the sets
\begin{equation}
N_t \assign \left\{ \theta \in \Theta : \varphi_t (\theta) = 0 \right\},
\end{equation}
quantify the size of these sets
in terms of their dimension and study the dependence of the dimension on the parameter $t$.

\begin{remark}\label{remark:basic1}
  The following properties are immediate consequences of the definitions.
  \begin{enumerate}[a)]
    \item $\varphi_t$ is measurable. In particular, $N_t \in \mathcal{B}$.
    
    \item $N_t$ is invariant under $T$, i.e. $T \left( N_t \right) = N_t
    = T^{- 1} \left( N_t \right)$.
    
    \item\label{remark:basic1-3} For $t < s$ we have $\varphi_t \left( \theta \right) \geqslant
    \varphi_s \left( \theta \right)$ for all $\theta \in \Theta$, whence $N_t
    \subseteq N_s$.
	We say therefore that the family $ ( N_t ) _{t \in
    \mathbbm{R}}$ is a filtration.
  \end{enumerate}
\end{remark}

\begin{remark} 
The set $\mathcal{A}_t\assign\{(\theta,x):0\leqslant x\leqslant\varphi_t(\theta)\}$
is the global attractor of the map $T_t$. It is bounded from above by the \emph{upper bounding graph} $\Phi_t$.
 As each ergodic $T_t$-invariant probability measure is
    supported by an invariant graph \cite[Theorem 1.8.4(iv)]{Arnold-book}, Lemma~\ref{max_invariant_function} shows that it is supported by $\Phi_t$ or by the baseline $\Phi_{\text{base}}$.

\end{remark}

In view of the filtration property of the sets $ ( N_t ) _{t \in
\mathbbm{R}}$ it is natural to define, for each $\theta\in\Theta$, a critical parameter $t_c(\theta)$ by
\begin{equation}
t_c(\theta) \assign \inf\left\{t\in\R: \varphi_t(\theta)=0\right\}.
\end{equation}
Because of Remark~\ref{remark:basic1}\ref{remark:basic1-3}) we have 
$t_c(\theta)=\sup\{t\in\R: \varphi_t(\theta)>0\}$, and for each $t\in\R$
\begin{equation}
S_t\assign\{\theta\in\Theta: t_c(\theta)=t\} = \bigcap_{t'' > t} N_{t''} \setminus \bigcup_{t' < t} N_{t'}.
\end{equation}
As the baseline itself is a trivial invariant graph, these points
can be understood as bifurcation points of invariant graphs.

\subsection{The plan of this note}
In section~\ref{subsec:fibre-wise} we characterize the sets $N_t$ and $S_t$ in terms of Birkhoff averages
\begin{equation}
\Gamma(\theta)\assign\liminf_{n\to\infty}\frac{1}{n}\sum_{k=1}^n\log g(T^{-k}\theta)
\end{equation}
which are closely related to the system's fibre-wise lower backwards Lyapunov exponents at the baseline. The main results are:
\begin{enumerate}[\quad$\triangleright$]
\item $\{\Gamma<t\}\subseteq N_t\subseteq\{\Gamma\leqslant t\}$ and $S_t=\{\Gamma=t\}$.
\item $\mu(S_t\setminus N_t)=0$ for each $\mu\in\mathcal{P}_T(\Theta)$.
\end{enumerate}

In section~\ref{subsec:average} we characterize the same sets for each $\mu\in\mathcal{E}_T(\Theta)$ in terms of the averaged quantity
\begin{equation}
\gamma(\mu)\assign \int\log g\,d\mu \fs 
\end{equation}
By Birkhoff's ergodic theorem, $\gamma(\mu)=\Gamma(\theta)$ for $\mu$-a.e. $\theta$.
The main observations are
\begin{enumerate}[\quad$\triangleright$]
\item $\mu(N_t)=1$ if and only if $\gamma(\mu)\leqslant t$ and
\item $\mu(S_t)=1$ if and only if $\gamma(\mu)=t$.
\end{enumerate}

Finally, in section~\ref{sec:hyperbolic}, we determine the Hausdorff dimensions $\dim_H$ and packing dimensions $\dim_P$ of the sets $N_t$ and $S_t$ for topologically mixing Anosov surface diffeomorphisms and baker maps using thermodynamic formalism.
Define
\begin{equation}
\gamma_{\min} \assign \inf_{\mu \in \mathcal{P}_T(\Theta)} \gamma \left(
   \mu \right) \hspace{1em} \text{\tmop{and}} \hspace{1em} \gamma_{\max} \assign
   \sup_{\mu \in \mathcal{P}_T( \Theta)} \gamma \left( \mu \right) .
\end{equation}
 In the Anosov case the main result reads:
suppose $\gamma_{\min}<\gamma_{\max}$ and denote by ${\gamma_c^-}\assign\gamma(\mu_{\mathrm{SRB}}^-)$ the average exponent of the SRB meaure of $T^{-1}$. There is a real analytic function $D : \left( \gamma_{\min}, \gamma_{\max} \right) \rightarrow
  \left[ 0, 1 \right]$ such that $D \left( {\gamma_c^-}
  \right) = 1, D'' \left( {\gamma_c^-} \right) < 0 \text{}$,
    \[ D' \left( t \right) = \left\{ \begin{array}{ll}
       > 0 & \mbox{for } t \in \left( \gamma_{\min}, {\gamma_c^-} \right) \\
       < 0 & \mbox{for } t \in \left( {\gamma_c^-}, \gamma_{\max} \right) 
     \end{array} \right. , \qquad\text{and}\]
\begin{enumerate}[\quad$\triangleright$]
\item 
       $\dim_H \left( N_t \right) = \dim_H \left( S_t \right) = D \left( t
       \right) + 1$ for $t \in \left( \gamma_{\min}, {\gamma_c^-}
       \right)$,
\item
       $\dim_H ( \Theta \setminus N_t ) = \dim_H ( S_t ) =       
       \dim_P ( \Theta \setminus N_t ) = D ( t ) + 1$
       for $t \in \left( {\gamma_c^-}, \gamma_{\max} \right)$, and
\item       
       $\dim _P ( N _t ) = 2 > \dim _H ( N _t ) $
for $t \in \left( \gamma_{\min}, {\gamma_c^-} \right)$.
\end{enumerate}

A number of proofs are deferred to section~\ref{sec:proofs}.

\section{Characterization of the sets $N_t$ and $S_t$ in terms of Lyapunov exponents}\label{sec:characterization}

\subsection{The sets $N_t$ and $S_t$ via fibre-wise Lyapunov exponents}
\label{subsec:fibre-wise}

Recall that $\varphi_t$ is defined in (\ref{eq:varphi-def}) as a pullback limit. Therefore it is natural to characterize its zeros in terms of the fibre-wise lower backwards Lyapunov exponents at the baseline
\begin{equation}
\Gamma_t(\theta)\assign
\liminf_{n\to\infty}\frac{1}{n}\log\left|\frac{d}{dx}f_t^n(T^{-n}\theta,x)_{|x=0}\right|
=
\Gamma(\theta)-t\fs 
\end{equation}

  The following characterization of the set $S_t$ in terms of $\Gamma(\theta)$ is an essential point of this note. Under additional hyperbolicity assumptions it will be the key
  to a multifractal bifurcation analysis of the family $(\varphi_t )_{t\in\R}$.

\begin{theorem}[$N_t$ and $S_t$ via trajectory-wise Lyapunov exponents]
\label{theo:main-Lambda}
	Let $t \in \mathbbm{R}$ and $\theta \in \Theta$.
  \begin{enumerate}[a)]
    \item If $\theta\not\in N_t$, then $\Gamma( \theta) \geqslant t$, i.e. $\Gamma_t(\theta)\geqslant0$.\label{theo:main-Lambda-a}
    \item If $\theta\in N_t$, then $\Gamma( \theta) \leqslant t$, i.e.  $\Gamma_t(\theta)\leqslant0$.  \label{theo:main-Lambda-b}  
    \item $\Gamma(\theta)=t_c(\theta)$ and $S_t=\{\theta\in\Theta: \Gamma(\theta)=t\}=\{\theta\in\Theta: \Gamma_t(\theta)=0\}$.\label{theo:main-Lambda-c}
  \end{enumerate}
\end{theorem}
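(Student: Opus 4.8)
The plan is to reduce the whole statement to the chain--rule formula for the backward derivative cocycle and to the two one--sided tangent inequalities for a concave function vanishing at the origin; part c) will then fall out formally from a) and b). As a preliminary step I would record that, since $h(0)=0$, the baseline is $T_t$--invariant and $f_t^n(\theta,0)=0$ for all $\theta,n$, so differentiating with the chain rule gives
\[
D_n(\theta)\assign\frac{d}{dx}f_t^n\bigl(T^{-n}\theta,x\bigr)\Big|_{x=0}
=e^{-nt}\prod_{k=1}^{n}g\bigl(T^{-k}\theta\bigr)>0\com
\]
whence $\tfrac1n\log D_n(\theta)=-t+\tfrac1n\sum_{k=1}^{n}\log g(T^{-k}\theta)$ and $\Gamma_t(\theta)=\liminf_n\tfrac1n\log D_n(\theta)=\Gamma(\theta)-t$, as already noted in the definition of $\Gamma_t$. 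Granting a) and b), part c) is immediate: by the filtration property of $(N_t)_t$ (Remark~\ref{remark:basic1}) one has $t_c(\theta)=\sup\{t:\theta\nin N_t\}=\inf\{t:\theta\in N_t\}$, so a) forces $t_c(\theta)\leqslant\Gamma(\theta)$ and b) forces $t_c(\theta)\geqslant\Gamma(\theta)$; hence $\Gamma(\theta)=t_c(\theta)$ and $S_t=\{\theta:t_c(\theta)=t\}=\{\Gamma=t\}=\{\Gamma_t=0\}$.

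For a) I would argue directly from concavity. If $\theta\nin N_t$ then $\varphi_t(\theta)>0$. The fibre map $F\assign f_t^n(T^{-n}\theta,\cdummy)$ is concave on $\R_{\geqslant}$ (Remark~\ref{f_calculation}) with $F(0)=0$ and $F'(0)=D_n(\theta)$, hence lies below its tangent at the origin: $F(x)\leqslant D_n(\theta)\,x$ for all $x\geqslant0$. Evaluating at $x=\varphi_t(T^{-n}\theta)$, using the backward invariance $f_t^n(T^{-n}\theta,\varphi_t(T^{-n}\theta))=\varphi_t(\theta)$ and $\varphi_t\leqslant M_t$, gives $0<\varphi_t(\theta)\leqslant D_n(\theta)\,M_t$, so $\tfrac1n\log D_n(\theta)\geqslant\tfrac1n\log\bigl(\varphi_t(\theta)/M_t\bigr)\to0$; taking $\liminf$ yields $\Gamma_t(\theta)\geqslant0$, i.e.\ $\Gamma(\theta)\geqslant t$.

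Part b) is the heart of the matter, and I would prove its contrapositive: assuming $\Gamma_t(\theta)=\Gamma(\theta)-t>0$, show that $\varphi_t(\theta)=\inf_n\psi_{t,n}(\theta)>0$. Fix $\delta\in\bigl(0,\Gamma_t(\theta)\bigr)$, so $D_m(\theta)\geqslant e^{m\delta}$ for all $m\geqslant n_0$, and fix $\varepsilon>0$ with $e^{\delta}(1-\varepsilon)>1$ together with $c\in(0,M_t]$ such that $h(x)\geqslant(1-\varepsilon)x$ on $[0,c]$ (available since $h(x)/x\to h'(0)=1$). Write the fibre orbit computing $\psi_{t,n}(\theta)=f_t^n(T^{-n}\theta,M_t)$ as $w_0=M_t$ and $w_j=f_t(T^{j-1-n}\theta,w_{j-1})=e^{-t}g(T^{j-1-n}\theta)\,h(w_{j-1})$, so that $w_n=\psi_{t,n}(\theta)$ and $0<w_j\leqslant M_t$ for all $j$ (Remark~\ref{f_calculation}). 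Let $m<n$ be the largest index with $w_m\geqslant c$ (it exists, since $w_0=M_t\geqslant c$). Then $w_j<c$, hence $h(w_j)\geqslant(1-\varepsilon)w_j$, for $j=m+1,\dots,n-1$, and telescoping the recursion from $j=n$ down to $j=m+1$ (with $h(w_m)\geqslant h(c)$ at the bottom) gives
\[
\psi_{t,n}(\theta)=w_n\geqslant D_{n-m}(\theta)\,(1-\varepsilon)^{n-m-1}\,h(c)\fs
\]
If $n-m\geqslant n_0$ this is at least $\tfrac{h(c)}{1-\varepsilon}\bigl(e^{\delta}(1-\varepsilon)\bigr)^{n-m}\geqslant\tfrac{h(c)}{1-\varepsilon}$, while if $n-m<n_0$ it is at least $\bigl(\min_{1\leqslant\ell<n_0}D_\ell(\theta)\bigr)(1-\varepsilon)^{n_0}h(c)>0$ because each finite product $D_\ell(\theta)$ is positive. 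Hence $\inf_n\psi_{t,n}(\theta)>0$, so $\theta\nin N_t$.

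The main obstacle is precisely this last estimate: one must keep the $T$--backward fibre orbit of $M_t$ away from $0$ uniformly in the orbit length, and the hypotheses provide no uniform positive lower bound on $g$. The device above circumvents this by localizing the estimate to the maximal terminal block of the orbit that stays inside the near--linear regime $[0,c]$ of $h$ --- there the geometric growth rate $e^{\delta}$ coming from the positive exponent dominates the per--step loss factor $1-\varepsilon$ --- while the finitely many remaining short blocks are absorbed using only positivity of the partial cocycle products $D_\ell(\theta)$.
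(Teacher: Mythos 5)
Your proposal is correct and takes essentially the same route as the paper's proof: part a) rests on the tangent-line inequality for the concave maps $f_t^n(T^{-n}\theta,\cdot)$ (you phrase it directly, the paper as a contrapositive, but the estimate is identical); part b) rests on the same ``last exit time above a threshold'' decomposition of the backward fibre orbit of $M_t$ combined with the near-linearity $h(x)\geqslant(1-\varepsilon)x$ of $h$ near the origin (your $m$ is the paper's $k(n)$, your $c$ the paper's $\varepsilon_0$); and part c) is the same formal consequence of a) and b) via the filtration property. The one small difference: the paper argues b) by contradiction, assuming $\varphi_t(\theta)=0$, which immediately guarantees $k(n)<n$ for all $n\geqslant n_0$; your direct argument instead splits into $n-m\geqslant n_0$ versus $n-m<n_0$, and should also note explicitly the trivial case $m=n$ (i.e.\ $w_n\geqslant c$, giving $\psi_{t,n}(\theta)\geqslant c$ outright) before writing ``Let $m<n$ be the largest index\dots'', since without the contradiction hypothesis there is no reason the backward orbit of $M_t$ must ever dip below $c$.
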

\noindent
Although we have no proof, we do not believe that $S_t\subseteq N_t$. Instead we have the following characterization of points in $S_t\setminus N_t$.
\begin{proposition}[Characterization of $S_t\setminus N_t$]
\label{prop:SN}
Let $t\in\R$ and $\theta\in\Theta$. Then $\theta\in S_t\setminus N_t$
if and only if 
\begin{equation}
  \sum_{k = 1}^{\ell_i} \varphi_{t} (T^{- k} \theta)
  +|\log \varphi_{t}(T^{-\ell_i}\theta)|
  =
  o(\ell_i)
\end{equation}
along some subsequence $(\ell_i)_{i\geqslant1}$.
\end{proposition}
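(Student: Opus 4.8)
The plan is to extract everything from the invariance identity $\varphi_t(T\theta)=e^{-t}g(\theta)\,h(\varphi_t(\theta))$ by telescoping logarithms, and then to reduce the statement to an elementary comparison of two Cesàro averages.

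\emph{Reduction and set-up.} If $\theta\in N_t$, then $T^{-k}\theta\in N_t$ for every $k$ by Remark~\ref{remark:basic1}, so all the numbers $\varphi_t(T^{-k}\theta)$ vanish; reading $|\log 0|=+\infty$, the right-hand side then fails, and $\theta\notin S_t\setminus N_t$ as well, so the asserted equivalence holds trivially. Hence assume from now on that $\theta\notin N_t$; by the $T$-invariance of $\Theta\setminus N_t$ this gives $\varphi_t(T^{-k}\theta)>0$ for all $k\in\Z$, while $\varphi_t(T^{-k}\theta)\leqslant M_t$ since $\varphi_t\leqslant M_t$ by construction. Set $q(x)\assign-\log\frac{h(x)}{x}$ for $x\in(0,M_t]$. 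Strict concavity of $h$ with $h(0)=0$ and $h'(0)=1$ makes $x\mapsto\frac{h(x)}{x}$ continuous, strictly decreasing, with values in $(0,1)$ and with limit $1$ as $x\downarrow0$; consequently $q$ is continuous and strictly increasing on $(0,M_t]$, with $q(x)\to0$ as $x\downarrow0$ and $0<q(x)\leqslant Q\assign q(M_t)<\infty$.

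\emph{The telescoping identity.} Taking logarithms in $\varphi_t(T^{-k+1}\theta)=e^{-t}g(T^{-k}\theta)\,h(\varphi_t(T^{-k}\theta))$ and writing $\log h(y)=\log y-q(y)$ yields
\begin{equation*}
\log\varphi_t(T^{-k+1}\theta)-\log\varphi_t(T^{-k}\theta)=-t+\log g(T^{-k}\theta)-q(\varphi_t(T^{-k}\theta)).
\end{equation*}
Summing over $k=1,\dots,n$, the left side telescopes, and after dividing by $n$ we obtain
\begin{equation*}
\frac1n\sum_{k=1}^n\log g(T^{-k}\theta)=t+\frac{\log\varphi_t(\theta)}{n}+A_n(\theta)+B_n(\theta),
\end{equation*}
with $A_n(\theta)\assign-\tfrac1n\log\varphi_t(T^{-n}\theta)$ and $B_n(\theta)\assign\tfrac1n\sum_{k=1}^nq(\varphi_t(T^{-k}\theta))\geqslant0$. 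Since $\varphi_t(T^{-n}\theta)\leqslant M_t$ we have $A_n(\theta)\geqslant-\tfrac1n\log^+M_t$, and $\tfrac1n\log\varphi_t(\theta)\to0$; hence $\liminf_n\bigl(A_n(\theta)+B_n(\theta)\bigr)\geqslant0$ and $\Gamma(\theta)=t+\liminf_n\bigl(A_n(\theta)+B_n(\theta)\bigr)$. By Theorem~\ref{theo:main-Lambda}c), this means that for $\theta\notin N_t$ one has $\theta\in S_t$ if and only if $\liminf_n(A_n(\theta)+B_n(\theta))=0$.

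\emph{Rewriting the right-hand side.} With $|\log y|=-\log y+2\log^+y$ and $\varphi_t\leqslant M_t$ we get $\tfrac1\ell|\log\varphi_t(T^{-\ell}\theta)|=A_\ell(\theta)+O(1/\ell)$, and $\tfrac1\ell\sum_{k=1}^\ell\varphi_t(T^{-k}\theta)=:C_\ell(\theta)$. Thus the non-negative bracketed expression in the proposition equals $\ell\,(A_\ell(\theta)+C_\ell(\theta))$ up to an $O(1)$-error, so its right-hand side is equivalent to $\liminf_n(A_n(\theta)+C_n(\theta))=0$. In view of the previous paragraph it therefore remains to prove
\begin{equation*}
\liminf_n\bigl(A_n(\theta)+B_n(\theta)\bigr)=0\quad\Longleftrightarrow\quad\liminf_n\bigl(A_n(\theta)+C_n(\theta)\bigr)=0.
\end{equation*}

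\emph{The core comparison.} Splitting each average according to whether $\varphi_t(T^{-k}\theta)<\varepsilon$ or $\geqslant\varepsilon$, and using that $q$ is increasing with $0<q\leqslant Q$ and $q(0^+)=0$, one gets for every $\varepsilon\in(0,M_t]$ and every $\ell$ the term-by-term bounds
\begin{equation*}
C_\ell(\theta)\leqslant\varepsilon+\frac{M_t}{q(\varepsilon)}\,B_\ell(\theta),\qquad B_\ell(\theta)\leqslant q(\varepsilon)+\frac{Q}{\varepsilon}\,C_\ell(\theta),
\end{equation*}
where one uses $\#\{k\leqslant\ell:\varphi_t(T^{-k}\theta)\geqslant\varepsilon\}\leqslant q(\varepsilon)^{-1}\ell\,B_\ell(\theta)$ for the first inequality and $\#\{k\leqslant\ell:\varphi_t(T^{-k}\theta)\geqslant\varepsilon\}\leqslant\varepsilon^{-1}\ell\,C_\ell(\theta)$ for the second. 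Since the coefficients $\tfrac{M_t}{q(\varepsilon)}$, $\tfrac{Q}{\varepsilon}$ are independent of $\ell$ and $q(\varepsilon)\to0$ as $\varepsilon\to0$, these bounds imply, along \emph{any} subsequence $(\ell_i)$, that $B_{\ell_i}(\theta)\to0$ if and only if $C_{\ell_i}(\theta)\to0$. Finally, if $\liminf_n(A_n+B_n)=0$, pick $(\ell_i)$ with $A_{\ell_i}+B_{\ell_i}\to0$; as $B_{\ell_i}\geqslant0$ and $A_{\ell_i}\geqslant-o(1)$ this forces $B_{\ell_i}\to0$ and $A_{\ell_i}\to0$, hence $C_{\ell_i}\to0$, hence $A_{\ell_i}+C_{\ell_i}\to0$ and $\liminf_n(A_n+C_n)=0$; the converse is obtained by exchanging the roles of $B$ and $C$. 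This proves the displayed equivalence, and with it the proposition. The only delicate points are the reduction to $\theta\notin N_t$ (so that the logarithms and the identity for $\Gamma(\theta)$ make sense) and the requirement that the comparison between $B_\ell$ and $C_\ell$ hold uniformly in $\ell$, so that it survives passage to the relevant subsequence; the rest is routine bookkeeping.
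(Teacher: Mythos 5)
Your proposal is correct, and it reaches the same telescoping identity as the paper does, just derived directly from the invariance relation $\varphi_t\circ T = e^{-t}g\cdot h(\varphi_t)$ rather than by first iterating the approximants $\psi_{t,n}$ and passing to the limit. The genuine difference lies in how the term $\sum_k q(\varphi_t(T^{-k}\theta))$ (your $B_\ell$; the paper writes $-H=q$) is compared with $\sum_k\varphi_t(T^{-k}\theta)$ (your $C_\ell$). The paper's route is a term-by-term two-sided bound $C^{-1}x\leqslant |H(x)|\leqslant Cx$ on $(0,M_t]$, justified by the claim $H'(0)=\tfrac12 h''(0)<0$; this quietly assumes $h''(0)$ exists and is strictly negative, which is not literally guaranteed by the stated hypotheses ($h$ is only required to be $C^1$ and strictly concave). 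Your double-counting argument, which uses only that $q$ is continuous, strictly increasing, bounded, and vanishes at $0^+$, gets the subsequential equivalence $B_{\ell_i}\to0\iff C_{\ell_i}\to0$ without invoking a second derivative at all, and so is both more elementary and strictly more general. You also make explicit the trivial reduction to $\theta\notin N_t$, which the paper handles implicitly. One minor stylistic point: since the bounds $C_\ell\leqslant\varepsilon+\tfrac{M_t}{q(\varepsilon)}B_\ell$ and $B_\ell\leqslant q(\varepsilon)+\tfrac{Q}{\varepsilon}C_\ell$ hold for every $\ell$ with $\varepsilon$-dependent constants, it would be worth emphasizing (as you do in the last sentence) that the $\varepsilon$ must be sent to $0$ only after passing to the subsequence, which is exactly why the uniformity in $\ell$ matters.
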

\noindent
The \emph{proofs} of Theorem~\ref{theo:main-Lambda} and Proposition~\ref{prop:SN}
are provided in section \ref{subsec:proofs1}.

\begin{corollary}\label{coro:St-Nt}
$\mu(S_t\setminus N_t)=0$ for each $\mu\in\mathcal{P}_T(\Theta)$ and $t\in\R$.
\end{corollary}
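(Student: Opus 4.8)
The plan is to combine Proposition~\ref{prop:SN} with Birkhoff's ergodic theorem applied to the \emph{backward} iterates of $T$. First I would record that $\varphi_t$ is bounded: by Remark~\ref{f_calculation}c) there is $M_t>0$ with $f_t(\theta,M_t)<M_t$ for all $\theta$, and together with the monotonicity from Remark~\ref{f_calculation}b) this gives $0\leqslant\psi_{t,n}(\theta)=f_t^n(T^{-n}\theta,M_t)\leqslant M_t$ for every $n$, hence $0\leqslant\varphi_t\leqslant M_t$. In particular $\varphi_t\in L^1(\mu)$ for every $\mu\in\mathcal{P}_T(\Theta)$; and since every $T$-invariant measure is also $T^{-1}$-invariant, Birkhoff's theorem applies to the averages $\frac1n\sum_{k=1}^n\varphi_t(T^{-k}\theta)$.

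By the ergodic decomposition it suffices to prove $\mu(S_t\setminus N_t)=0$ for $\mu\in\mathcal{E}_T(\Theta)$ (note that $S_t\setminus N_t$ is Borel, since $N_t\in\mathcal{B}$ by Remark~\ref{remark:basic1}a) and the filtration property gives $S_t=\bigcap_{m\geqslant1}N_{t+1/m}\setminus\bigcup_{m\geqslant1}N_{t-1/m}$). So I would fix an ergodic $\mu$ and argue by contradiction, assuming $\mu(S_t\setminus N_t)>0$. By Birkhoff's theorem for $T^{-1}$ the set of $\theta$ with $\frac1n\sum_{k=1}^n\varphi_t(T^{-k}\theta)\to\mu(\varphi_t)$ has full $\mu$-measure, hence meets $S_t\setminus N_t$; pick such a $\theta$.

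Since $\theta\in S_t\setminus N_t$, Proposition~\ref{prop:SN} supplies a subsequence $(\ell_i)$ along which $\sum_{k=1}^{\ell_i}\varphi_t(T^{-k}\theta)=o(\ell_i)$, the term $|\log\varphi_t(T^{-\ell_i}\theta)|$ being nonnegative; that is, $\frac1{\ell_i}\sum_{k=1}^{\ell_i}\varphi_t(T^{-k}\theta)\to0$. But at this $\theta$ the full sequence of backward Birkhoff averages converges, to $\mu(\varphi_t)$, and every subsequence of a convergent sequence has the same limit; hence $\mu(\varphi_t)=0$. As $\varphi_t\geqslant0$ this forces $\varphi_t=0$ $\mu$-a.e., i.e. $\mu(N_t)=1$, contradicting $\mu(S_t\setminus N_t)>0$ because $S_t\setminus N_t\subseteq\Theta\setminus N_t$.

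I expect the only delicate point to be bridging the \emph{subsequential} information of Proposition~\ref{prop:SN} to an almost-everywhere conclusion; Birkhoff's ergodic theorem is exactly what closes this gap, since it forces the whole sequence $\frac1n\sum_{k=1}^n\varphi_t(T^{-k}\theta)$ to converge on a set of full measure, so the subsequential value $0$ must be the actual limit. One can also run the argument directly for general $\mu\in\mathcal{P}_T(\Theta)$, without ergodic decomposition, by using the $T$-invariant conditional expectation $\bar\varphi_t=E_\mu[\varphi_t\mid\mathcal{I}]$ instead of the constant $\mu(\varphi_t)$: the set $A\assign S_t\setminus N_t$ is $T$-invariant, on $A$ the limit $\bar\varphi_t$ vanishes by the above, so $\int_A\varphi_t\,d\mu=\int_A\bar\varphi_t\,d\mu=0$, and since $\varphi_t>0$ on $A$ this yields $\mu(A)=0$.
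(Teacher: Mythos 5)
Your proof is correct and follows the same route as the paper: invariance of $S_t\setminus N_t$ and boundedness of $\varphi_t$ together with Proposition~\ref{prop:SN} force $\int_{S_t\setminus N_t}\varphi_t\,d\mu=0$, and since $\varphi_t>0$ on that set, it has measure zero. In fact you have made explicit a detail the paper elides: the subsequence $(\ell_i)$ in Proposition~\ref{prop:SN} depends on $\theta$, so one cannot literally pull a single $(\ell_i)$ through the integral; your appeal to Birkhoff's theorem for $T^{-1}$ (which upgrades the $\theta$-wise subsequential limit $0$ to an a.e.\ full-sequence limit, i.e.\ $E_\mu[\varphi_t\mid\mathcal I]=0$ on $S_t\setminus N_t$) is exactly the bridge that makes the displayed equation in the paper rigorous.
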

\begin{proof}
As $S_t\setminus N_t$ is $T$-invariant and as $0\leqslant\varphi_t\leqslant M_t$, Proposition~\ref{prop:SN} implies
\begin{equation}
\int_{S_t\setminus N_t}\varphi_t\,d\mu
=
\lim_{i\to\infty}\int_{S_t\setminus N_t}\frac{1}{\ell_i}\sum_{k = 1}^{\ell_i} \varphi_{t}\circ T^{- k}\,d\mu 
=0 \com
\end{equation}
and as $\varphi_t>0$ on $S_t\setminus N_t$, it follows that $\mu(S_t\setminus N_t)=0$.
\end{proof}

\subsection{The sets $N_t$ and $S_t$ via average Lyapunov exponents}
\label{subsec:average}

With respect to any invariant measure $\mu \in \mathcal{P}_T( \Theta)$ we define the average fibre-wise Lyapunov exponent at the baseline by
\begin{equation}
\gamma_t \left( \mu \right) \assign \int \log \left| \frac{d}{d
x} f_t \left(\,\cdummy\,, x \right) _{|x=0} \right| d \mu
=\gamma(\mu)-t \fs 
\end{equation} 
We note that 
\begin{equation}
 \int\Gamma(\theta)\,d\mu(\theta)=\int\log g\,d\mu =\gamma(\mu) 
\end{equation}
follows from Birkhoff's ergodic theorem, because $|\log g|$ is bounded by assumption.
\begin{corollary}[$N_t$ and $S_t$ via average Lyapunov exponents]
  \label{coro:N_crit}
  For $\mu \in \mathcal{P}_T( \Theta)$ and $t\in\R$ we have:
  \begin{enumerate}[a)]
    \item If $\mu (N_t) = 0$, then $\gamma_t (\mu) > 0$, i.e. $\gamma( \mu) > t$. \label{coro:N_crit-a}
    \item If $\mu (N_t) = 1$, then $\gamma_t (\mu) \leqslant 0$, i.e. $\gamma( \mu) \leqslant t$. \label{coro:N_crit-b}
  \end{enumerate}
  For ergodic $\mu$ both implications are equivalences and, furthermore,
  \begin{enumerate}[a)]
  \setcounter{enumi}{2}
  \item \label{coro:N_crit-c} $\mu \left( S_t \right)
  = 1$ for $t = \gamma \left( \mu \right)$, and $\mu \left( S_t \right) = 0$
  otherwise. 
  \end{enumerate}
\end{corollary}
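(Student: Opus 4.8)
The plan is to deduce all four assertions from Theorem~\ref{theo:main-Lambda}, Corollary~\ref{coro:St-Nt}, and the identity $\int\Gamma\,d\mu=\gamma(\mu)$ noted above, which is available because $\log g$, hence $\Gamma$, is bounded. The structural input is the two-sided inclusion $\{\Gamma<t\}\subseteq N_t\subseteq\{\Gamma\leqslant t\}$ — the left inclusion being the contrapositive of part~\ref{theo:main-Lambda-a}) of Theorem~\ref{theo:main-Lambda}, the right one part~\ref{theo:main-Lambda-b}) — together with the identification $S_t=\{\Gamma=t\}$ from part~\ref{theo:main-Lambda-c}).

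For part~\ref{coro:N_crit-a}), assume $\mu(N_t)=0$. Then $\mu(\{\Gamma<t\})=0$ by the left inclusion, so $\Gamma\geqslant t$ $\mu$-a.e. To upgrade this to strict positivity of $\gamma_t(\mu)=\gamma(\mu)-t$, I would argue that $\mu(\{\Gamma=t\})=0$: indeed $\{\Gamma=t\}=S_t=(S_t\cap N_t)\cup(S_t\setminus N_t)$, where the first piece is $\mu$-null because it lies in $N_t$ and the second is $\mu$-null by Corollary~\ref{coro:St-Nt}; hence $\Gamma>t$ $\mu$-a.e., and integrating the bounded function $\Gamma$ gives $\gamma(\mu)=\int\Gamma\,d\mu>t$. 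For part~\ref{coro:N_crit-b}), assume $\mu(N_t)=1$; the right inclusion yields $\Gamma\leqslant t$ $\mu$-a.e., so $\gamma(\mu)=\int\Gamma\,d\mu\leqslant t$.

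In the ergodic case one has $\mu(N_t)\in\{0,1\}$, since $N_t$ is $T$-invariant (Remark~\ref{remark:basic1}b)). The converse implications then follow by contraposition: if $\gamma(\mu)>t$, then $\mu(N_t)=1$ is excluded by part~\ref{coro:N_crit-b}), forcing $\mu(N_t)=0$; if $\gamma(\mu)\leqslant t$, then $\mu(N_t)=0$ is excluded by part~\ref{coro:N_crit-a}), forcing $\mu(N_t)=1$. For part~\ref{coro:N_crit-c}), Birkhoff's ergodic theorem gives $\Gamma=\gamma(\mu)$ $\mu$-a.e.; since $S_t=\{\Gamma=t\}$, this says precisely that $\mu(S_t)=1$ when $t=\gamma(\mu)$ and $\mu(S_t)=0$ otherwise.

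The only step that is more than bookkeeping is the \emph{strict} inequality in part~\ref{coro:N_crit-a}): without Corollary~\ref{coro:St-Nt} the argument delivers only $\gamma(\mu)\geqslant t$, and it is exactly the vanishing of $\mu(S_t\setminus N_t)$ — the impossibility of a positive-measure, $T$-invariant set on which $\Gamma=t$ but $\varphi_t>0$ — that rules out the borderline value. Everything else is the filtration property, the invariance and measurability of $N_t$ and $S_t$ from Remark~\ref{remark:basic1}, and the boundedness of $\log g$.
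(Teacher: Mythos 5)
Your proof is correct and follows essentially the same route as the paper: you deduce the two-sided bound $\{\Gamma<t\}\subseteq N_t\subseteq\{\Gamma\leqslant t\}$ and $S_t=\{\Gamma=t\}$ from Theorem~\ref{theo:main-Lambda}, invoke Corollary~\ref{coro:St-Nt} to kill the borderline set $\{\Gamma=t\}$ when $\mu(N_t)=0$ and thus obtain the strict inequality, integrate using $\int\Gamma\,d\mu=\gamma(\mu)$, and handle the ergodic case via $\mu(N_t)\in\{0,1\}$ and Birkhoff. The only cosmetic difference is that you phrase things via $\Gamma$ and the explicit decomposition $S_t=(S_t\cap N_t)\cup(S_t\setminus N_t)$, while the paper works with $\Gamma_t=\Gamma-t$ and writes $\mu(S_t)=\mu(S_t\setminus N_t)$ directly; the content is identical.
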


\begin{proof}
a) If $\theta\in\Theta\setminus N_t$ for $\mu$-a.e.\ $\theta$, then $\Gamma_t(\theta)\geqslant0$ for $\mu$-a.e.\ $\theta$ by Theorem~\ref{theo:main-Lambda}\ref{theo:main-Lambda-a}), 
and as $\mu(S_t)=\mu(S_t\setminus N_t)=0$ by Corollary~\ref{coro:St-Nt}, $\Gamma_t(\theta)>0$ for $\mu$-a.e.\ $\theta$ by Theorem~\ref{theo:main-Lambda}\ref{theo:main-Lambda-c}). Hence $\gamma_t(\mu)>0$.
b) If $\theta\in N_t$ for $\mu$-a.e.\ $\theta$, then $\Gamma_t(\theta)\leqslant 0$ for $\mu$-a.e.\ $\theta$ by Theorem~\ref{theo:main-Lambda}\ref{theo:main-Lambda-b}), so that $\gamma_t(\mu)\leqslant0$.

If $\mu$ is ergodic, the cases a) and b) are exhaustive so that we have equivalences, and claim~c)
  follows from Theorem~\ref{theo:main-Lambda}\ref{theo:main-Lambda-c}), since $\Gamma_{} \left(
  \theta \right) = \gamma \left( \mu \right)$ for $\mu$ a.e.\ $\theta \in \Theta$
  due to Birkhoff's ergodic theorem.
\end{proof}

Let $\Gamma^{(n)}(\theta)\assign\frac{1}{n}\sum_{k=1}^n\log g(T^{-k}\theta)$ so that $\Gamma(\theta)=\liminf_{n\to\infty}\Gamma^{(n)}(\theta)$.
  Inspired by the representation of $S_t$ as $\{\theta\in\Theta:\Gamma(\theta)=t\}$ from Theorem~\ref{theo:main-Lambda}\ref{theo:main-Lambda-c}) we define the sets
\begin{equation}
S_t' \assign \left\{ \theta \in \Theta : \lim_{n \rightarrow \infty} \Gamma^{(n)}
     \left( \theta \right) = t \right\}
\end{equation}
and
\begin{equation}
R_t' \assign \left\{
     \theta \in \Theta : \lim_{k \rightarrow \infty} \Gamma^{(n_k)} \left( \theta
     \right) = t \text{ for some } ( n_k ) _k \subset
     \mathbbm{N} \right\} .
\end{equation}
Observe that $S_t'\subseteq S_t\subseteq R_t'$.
Both of these sets are very close to $S_t$ in the following measure-theoretic sense.

\begin{corollary}
  \label{coro:S_ident}
  For every $\mu \in \mathcal{E}_T( \Theta)$ and $t\in\R$ we have
  $\mu \left( S_t \setminus S_t' \right) = 0$ and $\mu \left(R_t'\setminus S_t \right) = 0$.
\end{corollary}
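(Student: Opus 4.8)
The plan is to deduce the entire statement from a single application of Birkhoff's ergodic theorem, applied not to $T$ but to its inverse. Write
\[
\Gamma^{(n)}(\theta)=\frac{1}{n}\sum_{k=1}^n\log g(T^{-k}\theta)=\frac{1}{n}\sum_{j=0}^{n-1}\bigl(\log g\circ T^{-1}\bigr)(T^{-j}\theta),
\]
so that $\Gamma^{(n)}(\theta)$ is the $n$-th Birkhoff average of the observable $\log g\circ T^{-1}$ along the forward $T^{-1}$-orbit of $\theta$. Since $T$ is bi-measurable, any $\mu\in\mathcal{E}_T(\Theta)$ is $T^{-1}$-invariant and $T^{-1}$-ergodic (the $T$- and $T^{-1}$-invariant $\sigma$-algebras coincide), and since $|\log g|$ is bounded, the ergodic theorem provides a set $\Theta_\mu\in\mathcal{B}$ with $\mu(\Theta_\mu)=1$ on which $\lim_{n\to\infty}\Gamma^{(n)}(\theta)=\int\log g\circ T^{-1}\,d\mu=\gamma(\mu)$. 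In particular, for every $\theta\in\Theta_\mu$ the full limit of $(\Gamma^{(n)}(\theta))_n$ exists and equals $\gamma(\mu)$; hence $\Gamma(\theta)=\gamma(\mu)$ there, and the set of subsequential limits of $(\Gamma^{(n)}(\theta))_n$ is exactly the singleton $\{\gamma(\mu)\}$.

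It then remains to compare this with the definitions of $S_t'$ and $R_t'$ and with the identification $S_t=\{\theta\in\Theta:\Gamma(\theta)=t\}$ from Theorem~\ref{theo:main-Lambda}\ref{theo:main-Lambda-c}). If $t=\gamma(\mu)$, then $\Theta_\mu\subseteq S_t'\subseteq S_t$, so $\mu(S_t')=\mu(S_t)=1$, whence $\mu(S_t\setminus S_t')=0$ and $\mu(R_t'\setminus S_t)\leqslant\mu(\Theta\setminus S_t)=0$. If $t\neq\gamma(\mu)$, then every $\theta\in\Theta_\mu$ has $\Gamma(\theta)=\gamma(\mu)\neq t$, so $\theta\notin S_t$, and $t$ is not among the subsequential limits of $(\Gamma^{(n)}(\theta))_n$, so $\theta\notin R_t'$; thus $\mu(S_t)=\mu(R_t')=0$, and both asserted equalities hold trivially.

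There is no real obstacle here; the only point needing a little care is that the Birkhoff sums defining $\Gamma^{(n)}$ run along \emph{backward} orbits, so one must invoke the ergodic theorem for $T^{-1}$ (using invertibility of $T$ and the fact that ergodicity is preserved under passing to the inverse), after which everything is bookkeeping. It is worth noting that ergodicity of $\mu$ is essential: for a general $\mu\in\mathcal{P}_T(\Theta)$ the constant $\gamma(\mu)$ would be replaced by a non-constant $T$-invariant function, and the clean dichotomy "$t=\gamma(\mu)$ versus $t\neq\gamma(\mu)$" used above would no longer be available.
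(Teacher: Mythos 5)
Your proof is correct and follows essentially the same route as the paper: invoke Birkhoff's ergodic theorem along backward orbits (the paper does this implicitly, you spell out the reduction to $T^{-1}$), note that on a full-measure set the full limit of $\Gamma^{(n)}$ exists and equals $\gamma(\mu)$, and then combine with $S_t=\{\Gamma=t\}$ from Theorem~\ref{theo:main-Lambda}\ref{theo:main-Lambda-c}). The only cosmetic difference is that the paper delegates the claim $\mu(S_t)=\mathbbm{1}_{\{t=\gamma(\mu)\}}$ to Corollary~\ref{coro:N_crit}\ref{coro:N_crit-c}), whereas you re-derive it inline from the same ingredients.
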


\begin{proof}
From Theorem~\ref{theo:main-Lambda}\ref{theo:main-Lambda-c}), Corollary~\ref{coro:N_crit}\ref{coro:N_crit-c}) and Birkhoff's ergodic theorem we conclude that
  $\mu \left( S_t \right) = \mu \left( S_t' \right) = \mu \left( R_t' \right)
  = 1$ for $t = \gamma \left( \mu \right)$ and that $\mu \left( S_t \right) =
  \mu \left( S_t' \right) = \mu \left( R_t' \right) = 0$ for $t \neq \gamma
  \left( \mu \right)$. Thus, the differences are always sets of measure zero.
\end{proof}

From now on let $\Theta$ be a compact metrizable space, $T : \Theta \rightarrow \Theta$ a
homeomorphism, $g:\Theta\to(0,\infty)$ continuous, and $\mathcal{B}$ the Borel $\sigma$-algebra on $\Theta$.
  Then $\varphi_t$ is upper-semi-continuous as an infimum
  of continuous functions. In particular, $N_t$ is a $G_{\delta}$-set.

\begin{corollary}
Under these topological assumptions:
  \begin{enumerate}[a)]
    \item $N_t = \emptyset$ for $t < \gamma_{\min}$.
    
    \item $\emptyset \neq N_t$ and $N_t \neq \Theta$ for $\gamma_{\min} \leqslant t <
    \gamma_{\max}$.
    
    \item $\mu \left( N_{\gamma_{\max}} \right) = 1$ for all $\mu \in
    \mathcal{P}_T( \Theta)$.
    
        \item $N_t = \Theta$ for $t > \gamma_{\max}$.
  \end{enumerate}
\end{corollary}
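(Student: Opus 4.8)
The plan is to reduce all four assertions to two ingredients: a \emph{pointwise} two-sided bound $\gamma_{\min}\leqslant\Gamma(\theta)\leqslant\gamma_{\max}$ valid for every $\theta\in\Theta$, and the already established characterizations of $N_t$ and $S_t$, namely the inclusions $\{\Gamma<t\}\subseteq N_t\subseteq\{\Gamma\leqslant t\}$ and $S_t=\{\Gamma=t\}$ from Theorem~\ref{theo:main-Lambda}, the null-set statement $\mu(S_t\setminus N_t)=0$ from Corollary~\ref{coro:St-Nt}, and the equivalences for ergodic measures in Corollary~\ref{coro:N_crit}. First I would prove the pointwise bound: fixing $\theta$ and a subsequence $(n_i)$ along which $\frac1{n_i}\sum_{k=1}^{n_i}\log g(T^{-k}\theta)$ converges to $\Gamma(\theta)$, one passes to a further subsequence so that the empirical measures $\frac1{n_i}\sum_{k=1}^{n_i}\delta_{T^{-k}\theta}$ converge weakly-$*$ to some $\mu$; this $\mu$ is $T^{-1}$-invariant, hence $T$-invariant, hence $\mu\in\mathcal{P}_T(\Theta)$, and since $\log g$ is continuous and bounded, $\Gamma(\theta)=\int\log g\,d\mu=\gamma(\mu)\in[\gamma_{\min},\gamma_{\max}]$.

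Next I would record two consequences of compactness. Since $\mathcal{P}_T(\Theta)$ is weak-$*$ compact and $\mu\mapsto\gamma(\mu)=\int\log g\,d\mu$ is continuous, the infimum $\gamma_{\min}$ is attained; by the ergodic decomposition (and because $\gamma(\nu)\geqslant\gamma_{\min}$ for every ergodic $\nu$) it is attained at some $\nu\in\mathcal{E}_T(\Theta)$. Symmetrically, whenever $t<\gamma_{\max}$ there is $\mu\in\mathcal{P}_T(\Theta)$ with $\gamma(\mu)>t$, and since $\gamma(\mu)$ is the $\mathbb{P}$-average of $\gamma(\nu)$ over its ergodic components, at least one ergodic component $\nu$ satisfies $\gamma(\nu)>t$.

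With these in hand the four claims follow almost formally. For a) and d) only the inclusions $\{\Gamma<t\}\subseteq N_t\subseteq\{\Gamma\leqslant t\}$ are needed: if $t<\gamma_{\min}$ then $\{\Gamma\leqslant t\}=\emptyset$ by the lower bound, so $N_t=\emptyset$; if $t>\gamma_{\max}$ then $\{\Gamma<t\}=\Theta$ by the upper bound, so $N_t=\Theta$. For c), if $\theta\notin N_{\gamma_{\max}}$ then $\Gamma(\theta)\geqslant\gamma_{\max}$ by Theorem~\ref{theo:main-Lambda}\ref{theo:main-Lambda-a}), hence $\Gamma(\theta)=\gamma_{\max}$ by the upper bound, so $\theta\in S_{\gamma_{\max}}$ by Theorem~\ref{theo:main-Lambda}\ref{theo:main-Lambda-c}); thus $\Theta\setminus N_{\gamma_{\max}}\subseteq S_{\gamma_{\max}}\setminus N_{\gamma_{\max}}$, which is a $\mu$-null set for every $\mu\in\mathcal{P}_T(\Theta)$ by Corollary~\ref{coro:St-Nt}, giving $\mu(N_{\gamma_{\max}})=1$. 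For b): picking the ergodic minimizer $\nu$ with $\gamma(\nu)=\gamma_{\min}\leqslant t$, the ergodic case of Corollary~\ref{coro:N_crit}\ref{coro:N_crit-b}) yields $\nu(N_t)=1$, so $N_t\neq\emptyset$; and picking an ergodic $\nu$ with $\gamma(\nu)>t$ (possible since $t<\gamma_{\max}$), the ergodic case of Corollary~\ref{coro:N_crit}\ref{coro:N_crit-a}) yields $\nu(N_t)=0$, so $\Theta\setminus N_t$ has full $\nu$-measure and in particular $N_t\neq\Theta$.

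The only genuinely non-formal input is the pointwise inequality $\gamma_{\min}\leqslant\Gamma(\theta)\leqslant\gamma_{\max}$ and the compactness argument that $\gamma_{\min}$ is attained at an ergodic measure; everything else is bookkeeping with the previously proved characterizations. The mildly delicate spots are the two boundary cases $t=\gamma_{\min}$ in b) and $t=\gamma_{\max}$ in c), where the open inclusions $\{\Gamma<t\}\subseteq N_t\subseteq\{\Gamma\leqslant t\}$ no longer pin down $N_t$ on their own and one must fall back on the measure-theoretic statements — the ergodic minimizer for the former, and Corollary~\ref{coro:St-Nt} for the latter.
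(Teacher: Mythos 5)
Your proof is correct and follows essentially the same strategy as the paper: the pointwise bound $\gamma_{\min}\leqslant\Gamma(\theta)\leqslant\gamma_{\max}$ plus Theorem~\ref{theo:main-Lambda} for a) and d), Corollary~\ref{coro:N_crit} for b), and Corollary~\ref{coro:St-Nt} for c). The only differences are cosmetic: you derive the pointwise bound directly via weak-$*$ limits of empirical measures instead of citing the semi-uniform ergodic theorem, you invoke the ergodic equivalences of Corollary~\ref{coro:N_crit} with an explicit ergodic minimizer for b) rather than the contrapositive of the general implications, and for c) you show $\Theta\setminus N_{\gamma_{\max}}\subseteq S_{\gamma_{\max}}$ directly from $\Gamma\leqslant\gamma_{\max}$ rather than via the identity $N_{\gamma_{\max}}\cup S_{\gamma_{\max}}=\bigcap_{t>\gamma_{\max}}N_t$.
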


\begin{proof}
The semi-uniform ergodic theorem \cite[Theorem 1.9]{Sturman/Stark} yields $\gamma_{\min} \leqslant \Gamma(\theta)
  \leqslant \gamma_{\max}$ for all $\theta\in\Theta$, whence a) and d) follow from Theorem \ref{theo:main-Lambda}. Corollary~\ref{coro:N_crit} 
  shows that $N_t = \emptyset$ implies $t < \gamma(\mu)$ for each $\mu\in\mathcal{P}_T(\Theta)$, in particular
   $t<\gamma_{\min}$, and that $N_t =
  \Theta$ implies $t \geqslant \gamma_{\max}$. This shows assertion b).
  Finally, let $\mu\in\mathcal{P}_T(\Theta)$. As $\mu \left(S_{\gamma_{\max}}\setminus N_{\gamma_{\max}}\right)=0$ 
  by Corollary~\ref{coro:St-Nt}, we have
  $\mu\left(N_{\gamma_{\max}}\right)=\mu\left(N_{\gamma_{\max}}\cup S_{\gamma_{\max}}\right)=\mu\left(\bigcap_{t>\gamma_{\max}}N_t\right)=
  \mu\left(\Theta\right)=1$, where we used d) for the third identity. This proves c).
\end{proof}

\begin{remark}
We do not know whether $N_{\gamma_{\max}} = \Theta$.
\end{remark}

\section{Dimensions of the sets $N_t$ and $S_t$ for hyperbolic systems}
\label{sec:hyperbolic}

Let $ \Theta $ be a 2-dimensional compact Riemannian manifold, $T : \Theta \rightarrow \Theta$ a topologically mixing $C^2$-Anosov diffeomorphism, and $g:\Theta\to(0,\infty)$ a Hölder continuous function.
Denote by  $T_{\theta} \Theta = E^s \left( \theta \right) \oplus  E^u \left( \theta \right)$ the splitting of the tangent fibre over $\theta \in \Theta$ into its stable and unstable subspaces, see \cite{Barreira,Bowen} for details.
In the following, the Hausdorff dimension $\dim_H$ and the packing dimension $\dim_P$ are defined w.r.t. the associated Riemannian metric. We refer to
 \cite{Falconer2} for their definitions.

\begin{remark}
As a lower backward ergodic average, $\Gamma:\Theta\to\R$ is constant along unstable manifolds. Therefore the sets $S_t$ and $S_t'$ are unions of unstable manifolds, see Theorem~\ref{theo:main-Lambda}.
\end{remark}

The map $T^{-1}$ has a unique (and hence ergodic) Sinai-Ruelle-Bowen measure $\mu_{\mathrm{SRB}}^- $ characterized by

  \begin{equation}
    h_T ( \mu _{\mathrm{SRB}}^- ) + \mu _{\mathrm{SRB}}^- \left( \log \left\|
    d T \left| E^s \right. \right\| \right) = 
    \sup_{\mu \in \mathcal{P}_T( \Theta )}
    \bigl( h_T ( \mu ) + \mu \left( \log \left\| d T
    \left| E^s \right. \right\| \right)\bigr) = 0 \com \label{eq:SRB}
  \end{equation}
see \cite[section 4B]{Bowen}.
We define the critical parameter 
${\gamma_c^-} \assign \gamma ( \mu_{\mathrm{SRB}}^-)$. It is the "physical" or "observable" critical parameter of the family $(T_t)_{t \in \R} $ in the sense in which the SRB measure is often called a physical or observable measure, see Remark~\ref{rem:SRB} for more details.

\begin{theorem}  \label{theo:D_dim}
Suppose that $\gamma_{\min} < \gamma_{\max}$ so that $\log g$ is not cohomologous to a constant. Then
  
  \[ \begin{array}{ll}
       \dim_H ( N_t ) = \dim_H ( S_t ) = D ( t) + 1 & \mbox{for } t \in \left( \gamma_{\min}, {\gamma_c^-}       \right] ,\\
       \dim_H ( \Theta \setminus N_t ) = \dim_H ( S_t ) =
       \dim_P ( \Theta \setminus N_t ) = D ( t ) + 1 &
       \mbox{for } t \in \left[ {\gamma_c^-}, \gamma_{\max} \right)
     \end{array} \] 
  with
 \begin{equation}
 	 D ( t ) 
 	 \assign
 	  \max \left\{ \frac{h_T \left( \mu \right)}{ \mu
     \left( - \log \left\| d T \left| E^s \right. \right\| \right)} : \mu \in
     \mathcal{P}_T \left( \Theta \right) \mbox{ and } \mu \left( \log g
     \right) = t \right\} .	\label{eq:D_expression}
  \end{equation}
  Furthermore, $D : \left( \gamma_{\min}, \gamma_{\max} \right) \rightarrow
  \left[ 0, 1 \right]$ is a real analytic function with $D \left( {\gamma_c^-}
  \right) = 1$, $ D'' \left( {\gamma_c^-} \right) < 0 $ and
  \[ D' \left( t \right) = \left\{ \begin{array}{ll}
       > 0 & \mbox{for } t \in \left( \gamma_{\min}, {\gamma_c^-} \right) \\
       < 0 & \mbox{for } t \in \left( {\gamma_c^-}, \gamma_{\max} \right)
     \end{array} \right. . \]
  In addition, there is a unique equilibrium state $\mu_t \in \mathcal{E}_T ( \Theta)$ such that $D( t ) = \frac{ h_T ( \mu_t)}{\mu_t ( - \log \| d T | E^s \| )}$,
  $\mu_t( S_t) = 1$ and $\dim_H(\mu_t)=D(t)+1$.
\end{theorem}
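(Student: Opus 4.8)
The plan is to package the dimension formula for $N_t$, $S_t$ and $\Theta\setminus N_t$ as an instance of a known conditional variational principle for Birkhoff spectra combined with the Ledrappier--Young / Manning-type formula for dimensions on Anosov surfaces. The starting point is Theorem~\ref{theo:main-Lambda}\ref{theo:main-Lambda-c}), which identifies $S_t=\{\theta:\Gamma(\theta)=t\}$ with a lower Birkhoff level set of $-\log g$ under $T^{-1}$, together with the inclusions $S_t'\subseteq S_t\subseteq N_t\cup S_t$ and $S_t'\subseteq N_t$ (from $\{\Gamma<t\}\subseteq N_t\subseteq\{\Gamma\le t\}$). First I would set up the thermodynamic machinery for the topologically mixing Anosov diffeomorphism $T$: the geometric potential $\varphi^s(\theta)=-\log\|dT|E^s(\theta)\|$ is Hölder, and the function $D(t)$ in \eqref{eq:D_expression} is the classical ``dimension spectrum'' obtained by intersecting the conditional variational principle for $\log g$ with the Manning/Ledrappier--Young formula $\dim_H(\mu)=1+h_T(\mu)/\mu(\varphi^s)$ valid for every ergodic $\mu$ on an Anosov surface. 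Real analyticity, the values $D(\gamma_c^-)=1$, $D''(\gamma_c^-)<0$ and the sign of $D'$ all come from the standard Legendre-transform analysis of the pressure function $q\mapsto P(q\log g)$ together with the defining property \eqref{eq:SRB} of $\mu_{\mathrm{SRB}}^-$, which forces the equilibrium state at $t=\gamma_c^-$ to be $\mu_{\mathrm{SRB}}^-$ and hence $D(\gamma_c^-)=h_T(\mu_{\mathrm{SRB}}^-)/\mu_{\mathrm{SRB}}^-(\varphi^s)=1$.

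Next I would prove the dimension identities. For the upper bound on $\dim_H(N_t)$ when $t\le\gamma_c^-$: by Theorem~\ref{theo:main-Lambda}\ref{theo:main-Lambda-b}) we have $N_t\subseteq\{\Gamma\le t\}$, and since $\Gamma$ is constant along unstable manifolds, $N_t$ is a union of unstable leaves; the transverse (stable-direction) slice sits inside the lower level set $\{\liminf_n\frac1n S_n^{T^{-1}}(\log g)\le t\}$, whose Hausdorff dimension is controlled by $\sup\{h_T(\mu)/\mu(\varphi^s):\gamma(\mu)\le t\}$, which on $(\gamma_{\min},\gamma_c^-]$ equals $D(t)$ because $D$ is increasing there. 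Adding the unit dimension of the unstable direction gives $\dim_H(N_t)\le D(t)+1$. For the matching lower bound I would exhibit, for $t$ slightly below the target, the equilibrium state $\mu_{t'}$ with $\gamma(\mu_{t'})=t'<t$; since $\mu_{t'}$ is ergodic with $\gamma(\mu_{t'})=t'$, Corollary~\ref{coro:N_crit}\ref{coro:N_crit-c}) gives $\mu_{t'}(S_{t'})=1$, and $S_{t'}\subseteq N_{t'}\cup(S_{t'}\setminus N_{t'})$ with $\mu_{t'}(S_{t'}\setminus N_{t'})=0$ (Corollary~\ref{coro:St-Nt}), hence $\mu_{t'}(N_{t'})=1$, so $\mu_{t'}(N_t)=1$ by the filtration property; then $\dim_H(N_t)\ge\dim_H(\mu_{t'})=D(t')+1$, and letting $t'\uparrow t$ with continuity of $D$ closes the gap. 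The same lifted measure $\mu_t$ (or a sequence $\mu_{t_i}$) gives $\dim_H(S_t)\ge D(t)+1$ because $\mu_t(S_t)=1$; the upper bound $\dim_H(S_t)\le D(t)+1$ is the standard conditional Birkhoff-spectrum estimate applied on each stable slice. For $t\ge\gamma_c^-$, $\Theta\setminus N_t$ is handled symmetrically using $\{\Gamma<t\}\subseteq N_t$, i.e.\ $\Theta\setminus N_t\subseteq\{\Gamma\ge t\}$, and $\sup\{h_T(\mu)/\mu(\varphi^s):\gamma(\mu)\ge t\}=D(t)$ because $D$ decreases on $[\gamma_c^-,\gamma_{\max})$; the packing-dimension equality $\dim_P(\Theta\setminus N_t)=D(t)+1$ uses that for these upper level sets the packing and Hausdorff spectra coincide (the relevant set contains a full-measure set for $\mu_t$ and is contained in a level set whose packing dimension is also bounded by $D(t)+1$ via a covering/entropy-distortion argument along the lines of Barreira--Saussol or Takens--Verbitskiy).

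Finally the existence and properties of $\mu_t$: it is simply the ergodic equilibrium state realizing the maximum in \eqref{eq:D_expression}, i.e.\ the equilibrium state for the potential $s_t\varphi^s+q_t\log g$ where $s_t=D(t)$ and $q_t$ is the Lagrange multiplier chosen so that the equilibrium state has $\gamma$-average equal to $t$; uniqueness and ergodicity follow from uniqueness of equilibrium states for Hölder potentials over a topologically mixing Anosov system. Since $\gamma(\mu_t)=t$, Corollary~\ref{coro:N_crit}\ref{coro:N_crit-c}) gives $\mu_t(S_t)=1$, and $\dim_H(\mu_t)=1+h_T(\mu_t)/\mu_t(\varphi^s)=1+D(t)$ by the Ledrappier--Young formula. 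I expect the main obstacle to be the careful passage between the ``conditional variational principle'' for the Birkhoff averages of $\log g$ under $T^{-1}$ on the stable foliation and the corresponding statement for the whole manifold: one must control the local product structure of the Anosov system to ``fiber'' the sets $N_t$, $S_t$, $\Theta\setminus N_t$ over unstable leaves without losing or gaining dimension, handle the fact that these level sets are defined by $\liminf$ rather than $\lim$ (so the genuinely relevant reference sets are $S_t'$ and $R_t'$ rather than $S_t$, with Corollary~\ref{coro:S_ident} bridging the gap measure-theoretically but not setwise), and verify that the non-degeneracy $\gamma_{\min}<\gamma_{\max}$ (equivalently $\log g$ not cohomologous to a constant) is exactly what makes the pressure function strictly convex so that $D$ is genuinely real analytic with $D''(\gamma_c^-)<0$ rather than identically $1$.
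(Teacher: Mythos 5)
Your proposal follows essentially the same route as the paper: thermodynamic formalism via the pressure function of $q\log g-\delta\log\|dT|E^s\|$, the implicit-function/Legendre analysis for the analyticity and sign properties of $D$ pinned down at $\gamma_c^-$ by the SRB relation \eqref{eq:SRB}, lower bounds from full-measure equilibrium states $\mu_{t'}$ with $\gamma(\mu_{t'})=t'$ combined with $S_{t'}\subseteq N_t$ (resp.\ $\subseteq\Theta\setminus N_t$) and a limit $t'\to t$, and upper bounds from the inclusions $N_t\subseteq\{\Gamma\leqslant t\}$ and $\Theta\setminus N_t\subseteq\{\Gamma\geqslant t\}$ together with the monotonicity of $D$ on each side of $\gamma_c^-$. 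The one place where you gesture rather than argue --- the pointwise Hausdorff/packing bounds for the $\liminf$-defined sets $\{\Gamma\leqslant t\}$, $\{\Gamma\geqslant t\}$ and the subsequential level set $R_t'$, which are not of the standard $\{\lim=t\}$ type --- is exactly where the paper introduces the auxiliary sets $N_t^{\leqslant}$, $N_t^{\geqslant}$, $R_t'$ and the non-invariant reference measure $\nu_t$, and proves the needed pointwise-dimension inequalities (Lemmas~\ref{lem:D_point_dim} and \ref{S'_D}) by modifying Barreira's Theorem 12.3.1; you correctly flag this as the ``main obstacle'' but do not carry it out.
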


\begin{remark}
  In general one cannot expect that $D$ is concave on $\left( \gamma_{\min}, \gamma_{\max}
  \right)$, as is illustrated by
\cite[Proposition 9.2.2]{Barreira}.
\end{remark}

\begin{corollary}
$ \dim _P ( N _t ) = 2 > \dim _H ( N _t ) $ for $t \in \left( \gamma_{\min}, {\gamma_c^-} \right)$.
\end{corollary}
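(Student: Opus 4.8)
The plan is to establish the two inequalities separately. The bound $\dim_H(N_t)<2$ is immediate from Theorem~\ref{theo:D_dim}: it gives $\dim_H(N_t)=D(t)+1$ for $t\in(\gamma_{\min},{\gamma_c^-}]$, and since $D$ is strictly increasing on $(\gamma_{\min},{\gamma_c^-})$ with $D({\gamma_c^-})=1$, we get $D(t)<1$, hence $\dim_H(N_t)<2$, for every $t\in(\gamma_{\min},{\gamma_c^-})$. The real content is the equality $\dim_P(N_t)=2$, which I would deduce from a Baire-category argument once $N_t$ is known to be a \emph{dense} $G_\delta$-subset of $\Theta$.

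That $N_t$ is $G_\delta$ has already been observed (as $\varphi_t\geqslant0$ is upper-semi-continuous, $N_t=\bigcap_{n}\{\varphi_t<1/n\}$). For density, fix $t\in(\gamma_{\min},{\gamma_c^-})$. Since $t>\gamma_{\min}$ there is $\mu\in\mathcal{P}_T(\Theta)$ with $\gamma(\mu)<t$, and passing to an ergodic component we may take $\mu\in\mathcal{E}_T(\Theta)$; by Birkhoff's ergodic theorem $\Gamma(\theta)=\gamma(\mu)<t$ for $\mu$-a.e.\ $\theta$, so $\{\Gamma<t\}\neq\emptyset$. Pick $\theta_0\in\{\Gamma<t\}$. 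By Theorem~\ref{theo:main-Lambda} we have $\{\Gamma<t\}\subseteq N_t$. Moreover $\Gamma\circ T=\Gamma$, and, as noted at the start of Section~\ref{sec:hyperbolic}, $\Gamma$ is constant on local unstable manifolds; hence $\Gamma$ is constant on the whole global unstable manifold $W^u(\theta_0)$, so that $W^u(\theta_0)\subseteq\{\Gamma<t\}\subseteq N_t$. Since $T$ is a topologically mixing Anosov diffeomorphism of the compact manifold $\Theta$, $W^u(\theta_0)$ is dense in $\Theta$ (see \cite{Bowen}), and therefore so is $N_t$.

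To finish, I would invoke the description of packing dimension as modified upper box dimension, $\dim_P E=\inf\{\sup_i\overline{\dim}_B F_i: E\subseteq\bigcup_{i\geqslant1}F_i\}$ (see \cite{Falconer2}), recalling that $\overline{\dim}_B F=\overline{\dim}_B\overline{F}$. Let $N_t\subseteq\bigcup_{i\geqslant1}F_i$ be any countable cover and write $N_t=\bigcap_{j}V_j$ with each $V_j$ open and dense. Then $\Theta\subseteq\bigcup_i\overline{F_i}\cup\bigcup_j(\Theta\setminus V_j)$ is a countable union of closed sets, and each $\Theta\setminus V_j$ is nowhere dense. By the Baire category theorem some $\overline{F_{i_0}}$ has nonempty interior, hence contains an open ball of the surface $\Theta$, so $\overline{\dim}_B(F_{i_0})=\overline{\dim}_B(\overline{F_{i_0}})=2$. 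As the cover was arbitrary this yields $\dim_P(N_t)\geqslant2$, and since $\dim_P(N_t)\leqslant\dim_P(\Theta)=2$ we conclude $\dim_P(N_t)=2>\dim_H(N_t)$.

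The only genuinely non-formal ingredient is the density of $N_t$, which rests on two standard geometric facts about hyperbolic systems: that $\Gamma$ is constant along unstable manifolds (so that the nonempty set $\{\Gamma<t\}$ is saturated by global unstable manifolds) and that a single unstable manifold of a topologically mixing Anosov diffeomorphism is dense. If one prefers to avoid the latter, one can instead choose $\mu$ above to be additionally fully supported — such measures form a dense $G_\delta$ in $\mathcal{P}_T(\Theta)$ while $\{\mu:\gamma(\mu)<t\}$ is a nonempty open set — so that $\mu(\{\Gamma<t\})=1$ makes $\{\Gamma<t\}\subseteq N_t$ dense directly.
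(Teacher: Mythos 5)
Your proof is correct and follows essentially the same route as the paper: establish that $N_t$ is a dense $G_\delta$-subset of the surface $\Theta$ and then conclude $\dim_P(N_t)=2$, while $\dim_H(N_t)=D(t)+1<2$ comes straight from Theorem~\ref{theo:D_dim} and the monotonicity of $D$. The paper's own proof is extremely terse on both steps — it simply invokes ``topological transitivity'' and ``Baire's category theorem'' — whereas you supply the actual arguments. Your density argument is the genuinely valuable addition: the bare facts that $N_t$ is a nonempty $T$-invariant $G_\delta$ and that $T$ is transitive do \emph{not} by themselves force density (a fixed point of a transitive map is a nonempty invariant $G_\delta$ that is not dense), so some extra structure of $N_t$ has to be used. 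You correctly identify it: since $t>\gamma_{\min}$ the open set $\{\Gamma<t\}\subseteq N_t$ is nonempty, $\Gamma$ is a backward Birkhoff $\liminf$ and hence both $T$-invariant and constant along local (hence global) unstable leaves, so $\{\Gamma<t\}$ contains a full global unstable manifold, which is dense for a topologically mixing Anosov diffeomorphism. The alternative you sketch (pick a fully supported ergodic $\mu$ in the open set $\{\gamma(\mu)<t\}$) is also valid and perhaps cleaner. Your Baire-category argument for $\dim_P\geqslant 2$ via the modified upper box dimension, using that one of the closed sets in the cover of $\Theta$ must have interior, is exactly what the paper's appeal to ``Baire's category theorem'' is meant to encode.

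Two very small remarks. First, to pass from ``there is $\mu\in\mathcal{P}_T(\Theta)$ with $\gamma(\mu)<t$'' to an ergodic such $\mu$, it is worth saying explicitly that $\gamma(\mu)=\int\gamma(\mu_\omega)\,d\mathbb{P}(\omega)$ over the ergodic decomposition, so a positive-measure set of ergodic components has $\gamma(\mu_\omega)<t$. Second, passing from local to global unstable leaves uses $\Gamma\circ T=\Gamma$, which you implicitly use; it follows from the boundedness of $\log g$, and is worth a word since $\Gamma$ is only a $\liminf$.
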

\begin{proof}
  As $ T : \Theta \rightarrow \Theta $ is topologically transitive, the set $N_t$ is a dense $G_{\delta}$-set by Baire's category theorem, unless it is empty.
 Thus, $ \dim _P ( N _t ) = 2 = D ( \gamma _c ^- ) + 1 > \dim _H ( N _t ) $ for $t \in \left( \gamma_{\min}, {\gamma_c^-} \right)$.
\end{proof}

\begin{remark}\label{rem:SRB}
The forward SRB measure $\mu_{\mathrm{SRB}}^+$, defined as in (\ref{eq:SRB}) but with $-\log\left\| d T \left| E^u \right. \right\|$ instead of $\log\left\| d T \left| E^s \right. \right\| $, defines another critical parameter
${\gamma_c^+}\assign \gamma(\mu_{\mathrm{SRB}}^+)$. The forward and backward SRB measure coincide if and only if they are both absolutely continuous w.r.t. the volume measure \cite[section 4C]{Bowen}. Otherwise one can typically expect that ${\gamma_c^+}\neq{\gamma_c^-}$.

If ${\gamma_c^-}<t<{\gamma_c^+}$, then $\dim_H(\Theta\setminus N_t)=D(t)+1<2$. In particular, $N_t\subseteq\Theta$ has full volume such that the global attractor $\mathcal{A}_t\subseteq\Theta\times\R_\geqslant$ has volume $0$. At the same time, the fibre-wise forward Lyapunov exponent at the base satisfies $\lim_{n\to\infty}\frac{1}{n}\log\left|\frac{d}{dx}f_t^n(\theta,x)_{|x=0}\right|=\int\log\left|\frac{d}{dx}f_t(\theta,x)_{|x=0}\right|\,d\mu_{\mathrm{SRB}}^+=\gamma_c^+-t>0$ for a.e. $\theta\in\Theta$ w.r.t. the volume measure \cite[section 4C]{Bowen}.

If ${\gamma_c^+}<t<{\gamma_c^-}$, then $\dim_H(N_t)=D(t)+1<2$. In particular, $N_t\subseteq\Theta$ has zero volume. But now the fibre-wise forward Lyapunov exponent at the base is strictly negative so that the concavity of the fibre maps implies that
$\lim_{n\to\infty}f_t^n(\theta,x)=0$ for a.e. $\theta\in\Theta$ w.r.t. the volume measure and all $x\in\R_\geqslant$.
\end{remark}

\begin{theorem}  \label{theo:A_dim}
Suppose that $\gamma_{\min} < \gamma_{\max}$ so that $\log g$ is not cohomologous to a constant. Then
  
  \[ \begin{array}{ll}
       \dim_H ( \mathcal{A} _t ) = \dim_P ( \mathcal{A} _t ) = 3 & \mbox{for } t \in \left( \gamma _{\min} , {\gamma_c^-}       \right] ,\\
       \dim_H ( \mathcal{A} _t ) = \dim_P ( \mathcal{A} _t ) = D ( t ) + 2 &
       \mbox{for } t \in \left[ {\gamma_c^-}, \gamma_{\max} \right) .
     \end{array} \]
\end{theorem}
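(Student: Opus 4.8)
The plan is to squeeze $\dim_H(\mathcal{A}_t)\leqslant\dim_P(\mathcal{A}_t)$ between a lower and an upper bound that coincide, both obtained from the fibre structure of $\mathcal{A}_t$ over $\Theta$. The fibre of $\mathcal{A}_t=\{(\theta,x):0\leqslant x\leqslant\varphi_t(\theta)\}$ over $\theta\in\Theta\setminus N_t$ is the nondegenerate interval $[0,\varphi_t(\theta)]$, while over $\theta\in N_t$ it degenerates to a point; so the dimension of $\mathcal{A}_t$ should exceed that of $\Theta\setminus N_t$ by exactly one (and dominate that of $N_t$). All the dimensions of $N_t$ and $\Theta\setminus N_t$ that are needed are supplied by Theorem~\ref{theo:D_dim}.

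For the \emph{lower bound} I would note that for every $n\in\N$ the slab $\{\varphi_t>1/n\}\times[0,1/n]$ is contained in $\mathcal{A}_t$, so the elementary product estimate $\dim_H(A\times B)\geqslant\dim_H(A)+\dim_H(B)$ together with $\dim_H([0,1/n])=1$ yields $\dim_H(\mathcal{A}_t)\geqslant\dim_H(\{\varphi_t>1/n\})+1$. Since $\bigcup_{n\in\N}\{\varphi_t>1/n\}=\{\varphi_t>0\}=\Theta\setminus N_t$, countable stability of Hausdorff dimension then gives $\dim_H(\mathcal{A}_t)\geqslant\dim_H(\Theta\setminus N_t)+1$. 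For $t\in[\gamma_c^-,\gamma_{\max})$ Theorem~\ref{theo:D_dim} identifies $\dim_H(\Theta\setminus N_t)=D(t)+1$, hence $\dim_H(\mathcal{A}_t)\geqslant D(t)+2$; for $t\in(\gamma_{\min},\gamma_c^-]$ the filtration property gives $\Theta\setminus N_t\supseteq\Theta\setminus N_{\gamma_c^-}$ and Theorem~\ref{theo:D_dim} gives $\dim_H(\Theta\setminus N_{\gamma_c^-})=D(\gamma_c^-)+1=2$, hence $\dim_H(\mathcal{A}_t)\geqslant 3$.

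For the \emph{upper bound} I would use $0\leqslant\varphi_t< M_t$ (the bound $\varphi_t\leqslant\psi_{t,1}< M_t$ follows from \eqref{eq:varphi-def} and Remark~\ref{f_calculation}c)) to write $\mathcal{A}_t\subseteq(N_t\times\{0\})\cup\bigl((\Theta\setminus N_t)\times[0,M_t]\bigr)$, so that by finite stability of packing dimension and the estimate $\dim_P(A\times B)\leqslant\dim_P(A)+\dim_P(B)$ one gets $\dim_P(\mathcal{A}_t)\leqslant\max\{\dim_P(N_t),\,\dim_P(\Theta\setminus N_t)+1\}$. For $t\in[\gamma_c^-,\gamma_{\max})$ Theorem~\ref{theo:D_dim} gives $\dim_P(\Theta\setminus N_t)=D(t)+1$; moreover $N_t\supseteq N_{\gamma_c^-}$ forces $\dim_H(N_t)\geqslant\dim_H(N_{\gamma_c^-})=D(\gamma_c^-)+1=2$, while $\dim_P(N_t)\leqslant\dim_P(\Theta)=2$, so $\dim_P(N_t)=\dim_H(N_t)=2$; consequently $\dim_P(\mathcal{A}_t)\leqslant\max\{2,D(t)+2\}=D(t)+2$, and combined with the lower bound and $\dim_H\leqslant\dim_P$ this forces $\dim_H(\mathcal{A}_t)=\dim_P(\mathcal{A}_t)=D(t)+2$. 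For $t\in(\gamma_{\min},\gamma_c^-]$ the trivial bound $\dim_P(\mathcal{A}_t)\leqslant 3$, valid because $\mathcal{A}_t$ is contained in the $3$-dimensional manifold $\Theta\times\R_\geqslant$, already matches the lower bound, giving $\dim_H(\mathcal{A}_t)=\dim_P(\mathcal{A}_t)=3$.

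Apart from Theorem~\ref{theo:D_dim} and the elementary facts collected in Section~\ref{sect:pre}, the only ingredients are the standard product inequalities $\dim_H(A\times B)\geqslant\dim_H(A)+\dim_H(B)$ and $\dim_P(A\times B)\leqslant\dim_P(A)+\dim_P(B)$ for arbitrary sets, for which I would refer to \cite{Falconer2}. The main point deserving care is the boundary value $t=\gamma_c^-$: there $N_{\gamma_c^-}$ and its complement both have Hausdorff dimension $2$, so $N_{\gamma_c^-}$ may well carry positive volume and a direct argument that $\mathcal{A}_{\gamma_c^-}$ has positive $3$-dimensional Lebesgue measure is not available. Routing the estimate through the product-dimension inequalities above rather than through a volume computation both circumvents this difficulty and handles all of $(\gamma_{\min},\gamma_c^-]$ uniformly.
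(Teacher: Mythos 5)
Your proof is correct, and its skeleton is the same as the paper's: both bound $\dim_H(\mathcal{A}_t)$ from below via the product inequality applied to the slabs $\{\varphi_t>1/n\}\times[0,1/n]$, and from above via the product inequality applied to $(\Theta\setminus N_t)\times[0,M_t]$, feeding in the values of $\dim_H(\Theta\setminus N_t)$ and $\dim_P(\Theta\setminus N_t)$ from Theorem~\ref{theo:D_dim}. Two points where you genuinely diverge are worth noting. For the lower bound, the paper re-opens the hood of Lemma~\ref{lem:D_point_dim}: for each $t'>t$ it takes the dimension measure $\nu_{t'}$, restricts it to a suitable set $\{\varphi_t>1/k\}$, and applies the mass-distribution principle before letting $t'\searrow t$. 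You instead use only the stated conclusion $\dim_H(\Theta\setminus N_t)=D(t)+1$ of Theorem~\ref{theo:D_dim} together with countable stability $\dim_H(\Theta\setminus N_t)=\sup_n\dim_H(\{\varphi_t>1/n\})$; this is more elementary, treats Theorem~\ref{theo:D_dim} as a black box, and dispenses with the auxiliary parameter $t'$ entirely. For the upper bound, you decompose $\mathcal{A}_t\subseteq(N_t\times\{0\})\cup\bigl((\Theta\setminus N_t)\times[0,M_t]\bigr)$ and dispose of the first piece by $\dim_P(N_t)\leqslant 2\leqslant D(t)+2$, whereas the paper writes $\dim_P(\mathcal{A}_t)\leqslant\dim_P\bigl((\Theta\setminus N_t)\times[0,M_t]\bigr)$ directly; since $N_t\times\{0\}\subseteq\mathcal{A}_t$ but $N_t\times\{0\}\not\subseteq(\Theta\setminus N_t)\times[0,M_t]$, that inclusion as written is not literally correct, and your decomposition quietly repairs it. Finally, for $t\in(\gamma_{\min},\gamma_c^-]$ the paper bootstraps from case~(2) via $\mathcal{A}_t\supseteq\mathcal{A}_{\gamma_c^-}$, while you push the monotonicity through $\Theta\setminus N_t\supseteq\Theta\setminus N_{\gamma_c^-}$ before applying the product inequality; these are equivalent, but yours keeps the lower-bound argument in a single uniform form across the whole range of $t$.
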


\begin{remark}
Theorem~\ref{theo:D_dim} remains true also for Baker maps $B_a:[0,1]^2\to[0,1]^2$ for $ a \in ( 0 , 1 ) $,
\begin{equation}
 B_a(u, v) \assign \begin{cases}
     \left( \frac{u}{a} , a v \right)  & \mbox{for } u <     a\\
     \left(  \frac{u - a}{1-a} , a + ( 1 - a )v \right) &
     \mbox{for } u \geqslant a
   \end{cases} .
   \end{equation}
   In this case $\mu_{\mathrm{SRB}}^-=\mu_{\mathrm{SRB}}^+=m^2$ (the Lebesgue measure on $[0,1)^2$) so that $\gamma_c^-=\gamma_c^+=\int\log g\,dm^2$, and
\begin{displaymath}
\log \left\| d B_a \left| E^s \right. \right\|=\log \left( a \mathbbm{1}_{ [ 0, a ) \times [ 0, 1 ) } + ( 1 - a ) \mathbbm{1}_{ [ a, 1 ) \times  [ 0, 1 )} \right) .
\end{displaymath}   

The reason that the same (indeed, even much simpler) arguments apply is essentially that the discontinuity line is the boundary of a Markov partition.
So there are no additional difficulties passing from the symbolic to the (piecewise) smooth system, see also \cite{Schmeling2001}.
\end{remark}

\begin{remark}
For a baker map with $a=0.45$ and for $g(u,v)=1.001+\cos(2\pi v)$ we determined $D(t)$ numerically, see Figure~\ref{fig:1}. The seeming discontinuities at $\gamma_{\min}$ and $\gamma_{\max}$ are numerical artefacts due to the fact that only trajectories of length $N=21$ were used to approximate the equilibrium states $\mu_t$.

In order to see that the limits of $D(t)$ for $t\to\gamma_{\min/\max}$ are indeed zero, it suffices to show that all $\mu\in\mathcal{P}_T(\Theta)$ which minimize or maximize $\mu(\log g)$ have entropy $h_T(\mu)=0$, because then
$\lim_{t\to\gamma_{\min/\max}}h_T(\mu_t)=0$ due to the upper semicontinuity of $h_T$, and so also $\lim_{t\to\gamma_{\min/\max}}D(t)=0$, because the Lyapunov exponents of all $\mu_t$ are uniformly bounded away from zero. For $t\to\gamma_{\max}$ this yields a full proof, because it is easily seen 
that only the point masses in the fixed points $(0,0)$ and $(1,1)$ maximize $\mu(\log g)$, and so $\lim_{t\to\gamma_{\max}}D(t)=0$. For $t\to\gamma_{\min}$ the situation is less clear. There is numerical evidence (no proof) that the equidistribution on a 3-cycle minimizes $\mu(\log g)$. If this is indeed the case, then also $\lim_{t\to\gamma_{\min}}D(t)=0$. See also \cite{KJR2012} for more details.
\end{remark}

\begin{figure}\label{fig:1}
\includegraphics[scale=0.2]{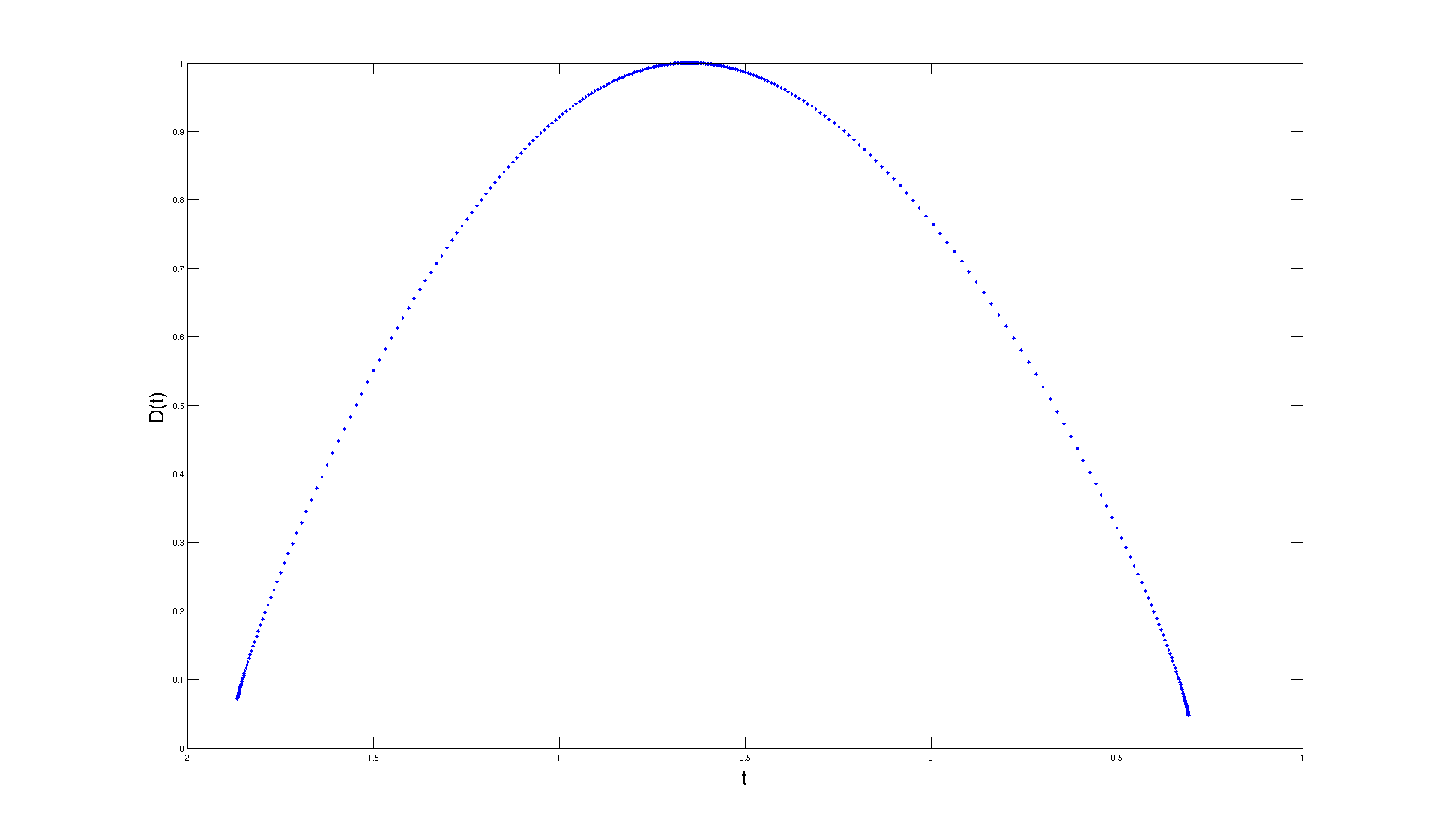}
\caption{The function $D(t)$ for a baker map with $a=0.45$ and $g(u,v)=1.001+\cos(2\pi v)$.}
\end{figure}

\section{Proofs\label{sec:proofs}}

\subsection{Proofs from section \ref{sec:characterization}}\label{subsec:proofs1}
  Let $ ( \psi_{t, n} )_{n\geqslant0}$ and $M_t > 0$ be as defined in Lemma
  \ref{max_invariant_function}, and denote $g_t \assign e^{- t} g$.

\begin{proof}[Proof of Theorem \ref{theo:main-Lambda}]
  \begin{enumerate}[a)]
  \item
Let $\theta \in \Theta$ such that $\Gamma_t \left( \theta \right) < 0$.
  There are $\delta > 0$ and integers $n_1<n_2<\dots$ such that
  \begin{equation}
   \frac{d}{d x} f_t^{n_j} \left( T^{- n_j} \theta, x \right) _{|x=0} = \prod_{i = 1}^{n_j } g_t \left( T^{- i} \theta \right) \leqslant e^{- \delta n_j}
\end{equation}
        for all $j\in\N$.
  As $\frac{d^2}{d x^2} f_{^{} t}^{n_j} \left( T^{- n_j} \theta, x \right) <
  0$ for all $x \in ( 0 , \infty ) $ (cf. Remark \ref{f_calculation}), we have
  \[ \varphi_t \left( \theta \right) \leqslant \psi_{t, n_j} \left( \theta\right) = f_t^{n_j} \left( T^{- n_j} \theta, M_t \right) \leqslant
     e^{- \delta n_j} M_t \longrightarrowlim^{j \rightarrow \infty} 0 \fs  \]
\item
Let $\theta \in \Theta$ such that $\Gamma_t \left( \theta \right) > 0$.
  There are $\delta > 0$ and $C > 0$ such that, for all $n \in \mathbbm{N}_0$,
  \begin{equation}
    \prod_{i = 1}^{n} g_t \left( T^{- i} \theta \right) \geqslant C
    e^{\delta n}\fs  \label{eq:point0}
  \end{equation}
  For a contradiction, suppose $\varphi_t \left( \theta \right) = 0$. Let
  $\varepsilon_0 \nocomma > 0$ and $n_0 \in \mathbbm{N}$ such that
  \begin{equation}
    \inf_{x \in \left( 0, \varepsilon_0 \right)} \frac{h \left( x \right)}{x}
    \geqslant e ^{-\delta} \hspace{1em} \text{\tmop{and}}
    \hspace{1em} \psi_{t, n _0} \left( \theta \right) <
    \varepsilon_0 \fs  \label{eq:point1}
  \end{equation}
  Let $k \left( n \right) \assign \max \left\{ k \leqslant n : f_t^k \left(
  T^{- n} \theta, M_t \right) \geqslant \varepsilon_0 \right\}$.
	As $k(n)<n$ if $n\geqslant n_0$, we have
for $j \in \left\{ 0,  \dots, n - k \left( n \right) - 1 \right\}$,
  i.e. for $n - j \in \left\{ k \left( n \right) + 1, \dots, n \right\}$,
  \begin{eqnarray}
    f_t^{n - k \left( n \right) - j} \left( T^{- n + k \left( n \right)}
    \theta, \varepsilon_0 \right) & \leqslant & f_t^{n - k \left( n \right) -
    j} \left( T^{- n + k \left( n \right)} \theta, f_t^{k \left( n \right)}
    \left( T^{- n} \theta, M_t \right) \right) \nonumber\\
    & = & f_t^{n - j} \left( T^{- n} \theta, M_t \right) \hspace{1em} < \;
    \varepsilon_0 \fs  \label{eq:point3}
  \end{eqnarray}
  Using now (\ref{eq:point3}) with $j=0$ and then (\ref{eq:point1}) and
  (\ref{eq:point3}) repeatedly, we obtain for $n \geqslant n_0$
  \begin{eqnarray}
    \psi_{t, n} \left( \theta \right) 
	& = &f_t^n(T^{-n}\theta,M_t)\nonumber\\    
    & \geqslant & f_t^{n - k \left( n
    \right)} \left( T^{- n_{} + k \left( n \right)} \theta, \varepsilon_0
    \right) \nonumber\\
    & = & g_t \left( T^{- 1} \theta \right) h \left( f_t^{n - k \left( n
    \right) - 1} \left( T^{- n + k \left( n \right)} \theta, \varepsilon_0
    \right) \right) \nonumber\\
    & \geqslant & g_t \left( T^{- 1} \theta \right) e ^{-\delta} f_t^{n - k \left( n \right) - 1} \left( T^{- n + k \left( n
    \right)} \theta, \varepsilon_0 \right) \nonumber\\
    & = & g_t \left( T^{- 1} \theta \right) e ^{-\delta}
    g_t \left( T^{- 2} \theta \right) h \left( f_t^{n - k \left( n \right) -
    2} \left( T^{- n + k \left( n \right)} \theta, \varepsilon_0 \right)
    \right) \nonumber\\
    & \geqslant & g_t \left( T^{- 1} \theta \right) e ^{-\delta} g_t \left( T^{- 2} \theta \right) e ^{-\delta}
    f_t^{n - k \left( n \right) - 2} \left( T^{- n + k \left( n \right)}
    \theta, \varepsilon_0 \right) \nonumber\\
    & \vdots &  \nonumber\\
    & \geqslant & \left( \prod_{i = 1}^{n - k \left( n \right)} g_t \left(
    T^{- i} \theta \right) \right) ( e ^{-\delta} )^{n - k
    \left( n \right)} \varepsilon_0 \fs   \label{eq:point4}
  \end{eqnarray}
  With (\ref{eq:point0}) and the first estimate of (\ref{eq:point1}) we
  obtain from (\ref{eq:point4})
  \[ \psi_{t, n} \left( \theta \right) \geqslant C e^{\delta \left( n - k
     \left( n \right) \right)}  e ^{-\delta ( n - k (
     n ))} \varepsilon_0 = C \varepsilon_0 \com \]
  and therefore $\varphi_t( \theta) \geqslant C \varepsilon_0$. This
  contradicts the assumption.
  \item This is a direct consequence of \ref{theo:main-Lambda-a}) and 
  \ref{theo:main-Lambda-b}).
  \end{enumerate}
\end{proof}

\begin{proof}[Proof of Proposition~\ref{prop:SN}.]\quad\\
Let $\theta\in\Theta\setminus N_t$, i.e. $\varphi_t(\theta)>0$.
By definition of the functions $\psi_{t,n}$ we have
\[ 
\psi_{t, n} (\theta) = 
 g_t(T^{- 1} \theta)\,h (\psi_{t, n - 1} (T^{- 1} \theta))
\]
  so that
  \begin{equation}
   \log \psi_{t, n} (\theta) = \log g_t (T^{- 1} \theta) + H (\psi_{t, n
     - 1} (T^{- 1} \theta)) + \log \psi_{t, n - 1}  (T^{- 1} \theta)   
\end{equation}
  where $H (x) \assign \log (x^{- 1} h (x))$. As $h$ is concave
  and $h'(0)=1$, $H$ extends to a decreasing function from $[0,
  \infty)$ to $(- \infty, 0]$ with $H (0) = 0$ and $H'(0)=\frac{1}{2}h''(0)<0$. Iterating this identity
  $\ell$ times $(1\leqslant\ell\leqslant n)$ yields
  \begin{equation}
    \log \psi_{t, n} (\theta) = \sum_{k = 1}^\ell \log g_t
    (T^{- k} \theta) + \sum_{k = 1}^\ell H (\psi_{t, n - k} (T^{- k} \theta)) +
    \log \psi_{t,n-\ell}(T^{-\ell}\theta) \fs 
    \label{eq:summation}
  \end{equation}
  For fixed $\ell\in\N$ we get in the limit $n\to\infty$
  \begin{equation}\label{eq:H-rec}
      \log \varphi_{t} (\theta) = \sum_{k = 1}^\ell \log g_t
    (T^{- k} \theta) + \sum_{k = 1}^\ell H (\varphi_{t} (T^{- k} \theta)) +
    \log \varphi_{t}(T^{-\ell}\theta) \fs 
  \end{equation}

Recall that $\theta\in\Theta\setminus N_t$. Because of Theorem~\ref{theo:main-Lambda}\ref{theo:main-Lambda-c}), $\theta\in S_t$ if and only if 
$\sum_{k = 1}^{\ell_i} \log g_t(T^{- k} \theta)=o(\ell_i)$ along some subsequence $(\ell_i)_{i\in\N}$. In  view of (\ref{eq:H-rec}) this is equivalent to
\begin{equation}
\label{eq:equiv-1}
  \sum_{k = 1}^{\ell_i} |H (\varphi_{t} (T^{- k} \theta))|
  +|\log \varphi_{t}(T^{-\ell_i}\theta)|
  =
  o(\ell_i) \fs 
\end{equation}
As $H'(0)<0$, there is a constant $C>0$ such that
  \begin{equation}
  C^{-1}\varphi_{t} (T^{- k} \theta)
  \leqslant
  |H (\varphi_{t} (T^{- k} \theta))|
  \leqslant
  C\varphi_{t} (T^{- k} \theta)\quad(k\geqslant0) \com
  \end{equation}
  so that (\ref{eq:equiv-1}) is equivalent to
\begin{equation}
  \sum_{k = 1}^{\ell_i} \varphi_{t} (T^{- k} \theta)
  +|\log \varphi_{t}(T^{-\ell_i}\theta)|
  =
  o(\ell_i) \fs 
\end{equation}
\end{proof}

\subsection{Proof of Theorem \ref{theo:D_dim}}

\begin{remark}\label{remark:Anosov} We refer to \cite{Bowen,Keller_book,Parry/Pollicott} for the following well-known facts about topologically mixing Anosov diffeomorphisms:
\begin{enumerate}[\quad$\triangleright$]
\item The entropy function $ h _T ( \cdot ) : \mathcal{P} _T ( \Theta ) \to \R_\geqslant $ is upper semi-continuous, as $ T $ is expansive.
 \item For Hölder-continuous functions $ \phi , \psi : \Theta \rightarrow \R $ the function $ t \mapsto P ( t \phi + \psi) $ is real analytic and strictly convex, unless $ \phi $ is cohomologous to a constant, where $ P (t \phi+\psi ) $ denotes the topological pressure of $ T $ for the potential $ t\phi+\psi $.
\end{enumerate}
\end{remark}

We shall prepare three Lemmas, before we prove the theorem.
In this section we assume that $ \gamma _{ \min } < \gamma _{ \max } $.
Recall that we defined $D(t)$ as
\begin{equation} 
 	 D ( t ) = \max \left\{ \frac{h_T \left( \mu \right)}{\mu
     (-\log \| d T | E^s  \|)} : \mu \in
     \mathcal{P}_T ( \Theta ) \mbox{ and } \gamma(\mu) = t \right\} .
  \end{equation}

\begin{lemma}\label{lemma:dim_D+1}
$\dim_H(S_t')=D(t)+1$ for $t\in(\gamma_{\min},\gamma_{\max})$.
Furthermore, there is a real analytic function $ q : ( \gamma_{\min}, \gamma_{\max} ) \rightarrow \mathbbm{R}$ such that for the unique equilibrium measure $\mu_t \in \mathcal{E}_T ( \Theta  )$ of the potential $ q ( t ) \log g_t - D ( t )  \log \| d T | E^s \|$ the following holds:
\begin{equation}
\mu_t ( S_t ) = 1,\quad D ( t )  = \frac{h_T ( \mu_t )}{\mu_t (-\log \| d T | E^s \| )},\quad\text{and}\quad
\dim_H(\mu_t)=D(t)+1.
\end{equation}
\end{lemma}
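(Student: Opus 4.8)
The plan is to transfer the question, via a Markov partition, to a one-dimensional conformal expanding factor in the stable direction, and there combine the thermodynamic formalism of topologically mixing Anosov surface diffeomorphisms with the multifractal analysis of Birkhoff averages (cf.\ \cite{Barreira,Bowen}). Fix a finite Markov partition for $T$ and its Hölder coding by a topologically mixing two-sided subshift of finite type. Since $\Gamma^{(n)}$, hence the defining condition of $S_t'$, is constant along unstable manifolds of $T$, the set $S_t'$ is a union of unstable manifolds of $T$, equivalently of stable manifolds of $T^{-1}$; quotienting $T^{-1}$ by its stable foliation yields a one-sided subshift of finite type $(\widehat\Sigma,\widehat\sigma)$ --- a conformal repeller --- to which $\log g$ descends up to a Hölder coboundary, whose transverse Riemannian geometry is governed by the descended potential $\log\|dT|E^s\|<0$, and on which $S_t'$ corresponds to the Birkhoff level set $\widehat{K}_t$ of the descended $\log g$ at level $t$. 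The natural-extension correspondence is a bijection between $\widehat\sigma$-invariant measures and $T$-invariant measures on $\Theta$ preserving entropy and the integrals of $\log g$ and of $\log\|dT|E^s\|$. Since every unstable manifold of $T$ is a $C^1$ curve (Hausdorff dimension $1$) and the local product structure has Hölder transverse regularity, this reduces $\dim_H(S_t')$ to $1+\dim_H(\widehat{K}_t)$.

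\emph{Thermodynamics and the function $q$.} For $q\in\R$, $s\geqslant0$ set $P(q,s):=P_T\bigl(q\log g_t+s\log\|dT|E^s\|\bigr)$. By Remark~\ref{remark:Anosov} this is real analytic and, for each fixed $s$, strictly convex in $q$, since $\log g$ is not cohomologous to a constant ($\gamma_{\min}<\gamma_{\max}$). As $\log\|dT|E^s\|<0$ is bounded away from $0$, $s\mapsto P(q,s)$ is strictly decreasing, $P(q,s)\to-\infty$, and $P(q,0)=P_T(q\log g_t)\geqslant h_T(\mu)+q\,\mu(\log g_t)=h_T(\mu)\geqslant0$ for any $\mu\in\mathcal{P}_T(\Theta)$ with $\mu(\log g)=t$ (such $\mu$ exist for $t\in(\gamma_{\min},\gamma_{\max})$); hence $P(q,s)=0$ has a unique real-analytic root $s_t(q)\geqslant0$, convex in $q$, with $s_t(0)=1$ by (\ref{eq:SRB}). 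Set $D(t):=\min_q s_t(q)=s_t(q(t))\in[0,1]$, where $q(t)$ is the unique minimiser, characterised by $\partial_q P(q(t),D(t))=0$, i.e.\ $\mu_{q(t),D(t)}(\log g_t)=0$. The implicit function theorem (nondegeneracy from strict convexity) gives that $t\mapsto q(t)$ and $t\mapsto D(t)$ are real analytic on $(\gamma_{\min},\gamma_{\max})$. Let $\mu_t:=\mu_{q(t),D(t)}\in\mathcal{E}_T(\Theta)$ be the corresponding equilibrium state, a Gibbs measure and hence ergodic (with potential $q(t)\log g_t+D(t)\log\|dT|E^s\|$, i.e.\ the measure of the statement). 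Then $\mu_t(\log g)=t$, so $\mu_t(S_t)=1$ by Corollary~\ref{coro:N_crit}c); and from $0=P(q(t),D(t))=h_T(\mu_t)+\mu_t\bigl(q(t)\log g_t+D(t)\log\|dT|E^s\|\bigr)$ with $\mu_t(\log g_t)=0$ one gets $h_T(\mu_t)=D(t)\,\mu_t(-\log\|dT|E^s\|)$. For any $\mu$ with $\mu(\log g)=t$ the variational principle gives $0=P(q(t),D(t))\geqslant h_T(\mu)+D(t)\,\mu(\log\|dT|E^s\|)$, i.e.\ $h_T(\mu)/\mu(-\log\|dT|E^s\|)\leqslant D(t)$; so $\mu_t$ attains the maximum and $D$ agrees with (\ref{eq:D_expression}).

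\emph{The dimensions.} On the conformal factor, multifractal analysis of Birkhoff averages (Pesin--Weiss, Barreira--Saussol; the needed real analyticity and strict convexity hold by the previous step) gives $\dim_H(\widehat{K}_t)=D(t)$, realised by the projection of $\mu_t$; hence $\dim_H(S_t')=1+D(t)$. The upper bound $\dim_H(S_t')\leqslant1+D(t)$ cannot come from a naive product estimate, as the Markov rectangles of $T$ are strongly eccentric; one uses instead a weighted Moran cover whose elements have stable and unstable side lengths adapted, along each orbit, to the two local contraction rates and to the constraint $\tfrac1n S_n\log g\approx t$, and $P(q(t),D(t))=0$ forces $\sum(\operatorname{diam})^s\to0$ for $s>1+D(t)$. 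The lower bound $\dim_H(S_t')\geqslant1+D(t)$ follows from the mass distribution principle applied to a probability measure concentrated on $S_t'$ with lower pointwise dimension everywhere $\geqslant1+D(t)$, obtained by coupling the transverse (stable) equilibrium state with the one-dimensional geometric conditionals along the unstable manifolds. Finally $\mu_t$ is exact dimensional \cite{Barreira}, and Young's dimension formula for $C^2$ surface diffeomorphisms writes $\dim_H(\mu_t)$ as the sum of its transverse (stable) and unstable conditional dimensions, equal respectively to $h_T(\mu_t)/\mu_t(-\log\|dT|E^s\|)=D(t)$ (by the above) and to $1$; hence $\dim_H(\mu_t)=D(t)+1$.

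\emph{Expected main obstacle.} The thermodynamics and the conformal spectrum on the factor are routine. The delicate point is the non-conformal metric bookkeeping on $\Theta$: making ``$\dim_H(S_t')=1+\dim_H(\widehat{K}_t)$'' rigorous (transverse regularity of the product structure for an Anosov surface diffeomorphism), building the weighted Moran cover that simultaneously controls both contraction rates and the Birkhoff constraint for the upper bound, and exhibiting the transverse$\,\times\,$unstable measure for the lower bound.
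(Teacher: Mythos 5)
Your strategy coincides with the paper's: exploit that $S_t'$ saturates unstable manifolds to slice the problem into a one-dimensional transverse piece, and run multifractal analysis of Birkhoff averages of $\log g$ with the conformal weight $u=-\log\|dT|E^s\|$ on that cross-section. Your thermodynamic derivation of $D(t)$, $q(t)$, $\mu_t$ via the zero-pressure equations and the check that this $D$ agrees with~(\ref{eq:D_expression}) are correct. The items you flag as the ``expected main obstacle'' --- the slicing identity $\dim_H(S_t')=\dim_H(S_t'\cap V^s_{\mathrm{loc}})+1$, the identification of $\dim_H(S_t'\cap V^s_{\mathrm{loc}})$ with the $u$-dimension, and the Moran-cover bookkeeping --- are exactly what the paper handles by invoking \cite[Theorem 12.2.2, Example 7.2.5, Theorem 10.1.4]{Barreira}, so these are known results rather than genuinely open difficulties, but in your text they remain a sketch.

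There is, however, a genuine flaw in your argument for $\dim_H(\mu_t)=D(t)+1$. Young's (Ledrappier--Young) formula for a hyperbolic ergodic measure $\mu$ on a surface gives $\dim_H(\mu)=h_T(\mu)/\lambda_u(\mu)+h_T(\mu)/|\lambda_s(\mu)|$. For $\mu_t$ the stable summand equals $D(t)$ by the pressure identity, but the unstable summand is $h_T(\mu_t)/\mu_t(\log\|dT|E^u\|)$, which equals $1$ only when $\mu_t$ satisfies Pesin's entropy formula in the unstable direction, i.e.\ when $\mu_t=\mu_{\mathrm{SRB}}^+$. The equilibrium state $\mu_t$ of $q(t)\log g_t+D(t)\log\|dT|E^s\|$ is not $\mu_{\mathrm{SRB}}^+$ for generic $t$, so its unstable conditional dimension is not $1$, and your asserted decomposition ``$D(t)$ (stable) $+\,1$ (unstable)'' does not describe $\mu_t$. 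You appear to be conflating the invariant Gibbs measure $\mu_t$ with the (non-invariant) reference measure $\nu_t$ of Lemma~\ref{lem:D_point_dim}, which is built precisely by replacing the unstable conditionals by geometric (Lebesgue) conditionals and which does have pointwise dimension $D(t)+1$ on $S_t'$. Whatever the intended meaning of the paper's $\dim_H(\mu_t)$ via \cite[Lemma 10.1.6]{Barreira}, the route you propose to it --- Young's formula applied to the invariant $\mu_t$ --- does not give the stated value.
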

\begin{proof}
For $\theta\in\Theta$ denote by $V^s_{loc}(\theta)$ the (suitably defined) local stable manifold of $T$ through $\theta$. Then \cite[Theorem 12.2.2]{Barreira} asserts that $\dim_H(S_t')=\dim_H(S_t'\cap V^s_{loc}(\theta))+1$ for each $\theta\in S_t'$. With the same argument as in \cite[Example 7.2.5]{Barreira} one shows that $\dim_H(S_t'\cap V^s_{loc}(\theta))$ coincides with the $u$-dimension of $S_t'$ for the function $u=-\log\|dT|E^s\|$, and 
\cite[Theorem 10.1.4]{Barreira} asserts that this $u$-dimension takes the value $D(t)$. (For the application of this last theorem observe that
all Hölder potentials have a unique equilibrium state.)
The assertions on $q(t)$ and $\mu_t$ follow from \cite[Theorems 10.1.4 and 10.3.1]{Barreira} together with Corollary \ref{coro:S_ident}, \cite[Lemma 10.1.6]{Barreira} and its proof.
\end{proof}

\begin{lemma} \label{D_analytic}
 $D : ( \gamma_{\min}, \gamma_{\max} )  \rightarrow [ 0, 1 ]$ is a real analytic function such that $D (  {\gamma_c^-} ) = 1$, $D''( {\gamma_c^-} ) < 0 $ and
  \[ D' ( t ) = \left\{ \begin{array}{ll}
       > 0 & \mbox{for } t \in ( \gamma_{\min}, {\gamma_c^-} ) \\
       < 0 & \mbox{for } t \in ( {\gamma_c^-}, \gamma_{\max} )
     \end{array} \right. . \]
 
\end{lemma}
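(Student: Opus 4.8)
The plan is to realize $D(t)$ implicitly via a pressure equation and then apply the standard analyticity/convexity machinery for topologically mixing Anosov diffeomorphisms recalled in Remark~\ref{remark:Anosov}. Write $u\assign-\log\|dT|E^s\|$, which is a strictly positive Hölder function since $T$ is Anosov, and recall from the variational description \eqref{eq:D_expression} that $D(t)=\max\{h_T(\mu)/\mu(u):\gamma(\mu)=t\}$. The key identity is that $s=D(t)$ is characterized by the vanishing of a pressure: for fixed $t$, the quantity $s\mapsto\sup\{h_T(\mu)-s\,\mu(u):\gamma(\mu)=t\}$ (a constrained pressure) is strictly decreasing in $s$ and has a unique zero at $s=D(t)$; equivalently, using the Lagrange multiplier $q=q(t)$ from Lemma~\ref{lemma:dim_D+1}, $(q(t),D(t))$ is the unique solution of the system $P(q\log g_t-s\,u)=0$, $\frac{\partial}{\partial q}P(q\log g_t-s\,u)=0$, the second equation encoding the constraint $\mu_t(\log g)=t$ (note $\log g_t=\log g-t$). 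I would take these relations from \cite[Theorems 10.1.4, 10.3.1 and Lemma 10.1.6]{Barreira}, exactly as invoked in the proof of Lemma~\ref{lemma:dim_D+1}.

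From this implicit description, real analyticity of $D$ on $(\gamma_{\min},\gamma_{\max})$ follows from the implicit function theorem: the map $(t,q,s)\mapsto\bigl(P(q\log g-s\,u)-tq+\text{correction},\ \partial_q P(\cdots)\bigr)$ — I would have to write the correction terms carefully, folding the $-t$ shift into the potential — is real analytic in all arguments by Remark~\ref{remark:Anosov}, and its Jacobian in $(q,s)$ is nondegenerate precisely because $\log g$ is not cohomologous to a constant (so $t\mapsto P(q\log g-s u)$ is strictly convex) and $u>0$ (so $s\mapsto P(q\log g-s u)$ is strictly decreasing); the $2\times 2$ determinant is essentially the variance of $\log g$ under $\mu_t$ times $\mu_t(u)$, which is strictly positive. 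This simultaneously gives analyticity of $q(t)$ and $\mu_t$ (as the equilibrium state depends analytically on the Hölder potential).

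For the derivative formula I would differentiate the defining relations. From $h_T(\mu_t)=D(t)\,\mu_t(u)$ and $\mu_t(\log g)=t$, differentiating in $t$ and using that at an equilibrium state the first-order variation of $h_T(\mu)-D(t)\mu(u)+q(t)\mu(\log g)$ in $\mu$ vanishes (Lagrange/equilibrium condition), the mixed terms cancel and one is left with $D'(t)=q(t)\big/\mu_t(u)$ — i.e. $D'(t)$ has the sign of the Lagrange multiplier $q(t)$. Thus I must identify the sign of $q(t)$: $q(t)=0$ exactly when the unconstrained maximizer of $h_T(\mu)/\mu(u)$ already satisfies $\gamma(\mu)=t$. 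But the unconstrained maximum of $h_T(\mu)/\mu(u)$ equals $1$ (the dimension of $\Theta$), attained uniquely at the equilibrium state of $-u=\log\|dT|E^s\|$ normalized to pressure zero, which is precisely $\mu_{\mathrm{SRB}}^-$ by \eqref{eq:SRB}; and $\gamma(\mu_{\mathrm{SRB}}^-)=\gamma_c^-$ by definition. Hence $D(\gamma_c^-)=1$, $q(\gamma_c^-)=0$, and for $t\neq\gamma_c^-$ the constraint is active so $q(t)\neq 0$; the sign of $q(t)$ is positive for $t<\gamma_c^-$ and negative for $t>\gamma_c^-$ because increasing the Lagrange multiplier $q$ moves the equilibrium state's value of $\gamma$ monotonically (again by strict convexity of pressure, $\frac{d}{dq}\mu_{q}(\log g)=\mathrm{Var}_{\mu_q}(\log g)>0$), and at $q=0$ we sit at $t=\gamma_c^-$. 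This gives the stated sign of $D'$. Finally $D''(\gamma_c^-)<0$: differentiating $D'(t)=q(t)/\mu_t(u)$ at $\gamma_c^-$, where $q(\gamma_c^-)=0$, gives $D''(\gamma_c^-)=q'(\gamma_c^-)/\mu_{\gamma_c^-}(u)$, and $q'(\gamma_c^-)<0$ because $t=\mu_t(\log g)$ is, near $\gamma_c^-$, an increasing function of $q$ via the positive variance, so its inverse $q(t)$ has negative... wait — I need $t\mapsto q$ to be decreasing, which holds since larger $t$ forces the constrained optimum away from the variance-maximizing direction; more cleanly, $\frac{dt}{dq}\big|_{q=0}=\mathrm{Var}_{\mu_{\mathrm{SRB}}^-}(\log g)>0$ is not quite the right derivative because $s$ also varies, but along the solution curve one computes $\frac{dt}{dq}<0$ at $\gamma_c^-$ from the full Jacobian; this is the one computation I would carry out explicitly.

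\textbf{Main obstacle.} The conceptual content is routine thermodynamic formalism, but the bookkeeping is delicate: one must track the two-parameter pressure function $P(q\log g-s u)$ with the additional affine dependence on $t$ through $g_t=e^{-t}g$, correctly extract the constrained maximization as a saddle-point/Lagrange condition, and keep the signs straight when differentiating implicitly. The single genuinely non-formal step is verifying $\frac{dt}{dq}<0$ along the solution curve at $q=0$ (equivalently $q'(\gamma_c^-)<0$, equivalently $D''(\gamma_c^-)<0$); everything else is an application of the implicit function theorem together with strict convexity of pressure, guaranteed by the hypothesis that $\log g$ is not cohomologous to a constant.
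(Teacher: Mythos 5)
Your overall plan — realize $D(t)$ through the pressure equations $P(q(\log g - t)-s\,u)=0$, $\partial_q P=0$, identify $(D(\gamma_c^-),q(\gamma_c^-))=(1,0)$ via $\mu_{\mathrm{SRB}}^-$, and differentiate implicitly — is exactly the route the paper takes (it follows and extends the proof of \cite[Theorem 10.3.1]{Barreira} with the two-parameter pressure $Q(\delta,q,t)=P(q(\log g-t)-\delta u)$). So the approach is right. But there is a sign error in the derivative formula, and it cascades through the rest of your argument.

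Differentiating $Q(D(t),q(t),t)=0$ in $t$ and using $\partial_q Q=0$ gives
\begin{equation*}
\partial_\delta Q\cdot D'(t)+\partial_t Q=0,\qquad \partial_t Q = -q(t),
\end{equation*}
hence $D'(t)=q(t)\big/\partial_\delta Q$. Since $\partial_\delta Q=-\mu_t(u)<0$, the correct formula is
\begin{equation*}
D'(t)=-\,\frac{q(t)}{\mu_t(u)},
\end{equation*}
not $D'(t)=q(t)/\mu_t(u)$ as you wrote; $D'$ has the \emph{opposite} sign to $q$. Because of this flip you were then forced to claim $q(t)>0$ for $t<\gamma_c^-$ and $q(t)<0$ for $t>\gamma_c^-$, i.e.\ $q'(\gamma_c^-)<0$, and you explicitly flag the sign of $dt/dq$ along the solution curve as the one computation you would still need to do. That computation goes the other way: differentiating $\partial_q Q(D(t),q(t),t)=0$ and evaluating at $q(\gamma_c^-)=0$ kills all the cross terms (the paper's equation (\ref{eq:q_deriv})), leaving
\begin{equation*}
q'(\gamma_c^-)=\frac{1}{\partial_q^2 Q(1,0,\gamma_c^-)}>0
\end{equation*}
by strict convexity of $q\mapsto Q(\delta,q,t)$ — which is in fact the first instinct you stated ("$\frac{d}{dq}\mu_q(\log g)=\mathrm{Var}>0$") before talking yourself out of it. With the corrected signs everything lines up: $q<0$ on $(\gamma_{\min},\gamma_c^-)$, $q>0$ on $(\gamma_c^-,\gamma_{\max})$, $D'(t)=-q(t)/\mu_t(u)$ gives the stated signs of $D'$, and $D''(\gamma_c^-)=-q'(\gamma_c^-)/\mu_{\gamma_c^-}(u)<0$. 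As written, your proof reaches the correct conclusions only because two sign errors cancel, and the one step you acknowledged you had not checked is the one you guessed wrong.
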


\begin{proof}[Proof of Lemma \ref{D_analytic}]
For this proof, we will extend the proof of \cite[Theorem 10.3.1]{Barreira}:
Denote $( u , \Phi ) \assign ( - \log\|dT|E^s\| , \log g ) $ and consider the real analytic function $Q \left( \delta, q, t \right) \assign P \left( q \left( \Phi - t \right) -
  \delta u \right)$. In that proof it is shown that the equations
  \begin{equation}
    \left\{ \begin{array}{c}
      Q \left( D \left( t \right), q \left( t \right), t \right) = 0\\
      \frac{\partial Q}{\partial q} \left( D \left( t \right), q \left( t
      \right), t \right) = 0
    \end{array} \right.  \label{eq:equations0}
  \end{equation}
  determine a real analytic function $\left( D, q \right) : \left( \gamma_{\min},
  \gamma_{\max} \right) \rightarrow \mathbbm{R}^2$.
  These equations are further equivalent to
    \begin{equation}
    \left\{ \begin{array}{l}
      h _T ( \mu _t ) + \mu _t (q ( t ) \Phi - D ( t )  u  ) = q ( t ) t\\
      \mu _t ( \Phi ) = t
    \end{array}  \right. . \label{eq:equations2}
  \end{equation}
  Differentiating the first equality of (\ref{eq:equations0}) with respect to $t$ and using the
  second one, we obtain
  \begin{equation}
    D' \left( t \right) = \frac{q \left( t \right)}{\frac{\partial
    Q}{\partial \delta} \left( D( t ), q( t ), t
    \right)} \fs  \label{eq:D_deriv}
  \end{equation}
  Similarly, differentiating the second equality of (\ref{eq:equations0}) and using (\ref{eq:D_deriv}), we obtain
  \begin{equation}
    q'( t ) = \frac{1}{\frac{\partial^2 Q}{\partial q^2} \left(
    D ( t ), q ( t ), t \right)} \left( 1 - q ( t ) \frac{\frac{\partial^2 Q}{\partial \delta \partial q} \left( D( t ), q ( t ), t \right)}{\frac{\partial Q}{\partial
    \delta} \left( D ( t ), q ( t ), t \right)} \right) .
    \label{eq:q_deriv}
  \end{equation}
  
From the second equality of (\ref{eq:equations2}) it follows $\mu_t \left( S_t' \right) = 1$ by Birkhoff's theorem, whence $\mu_t \left( S_t \right) = 1 $ in view of Corollary \ref{coro:S_ident}.
  Moreover, from (\ref{eq:equations2}) we obtain
    \begin{equation}
    D( t ) = \frac{h_T \left( \mu_t \right)}{\mu_t ( u )} \fs  \label{D_frac}
  \end{equation}

  Furthermore, from $\left( \ref{eq:SRB} \right.$) we have
  \begin{equation}
  \left\{
  	\begin{array}{l}
  	 Q \left( 1, 0, {\gamma_c^-} \right) = P \left( - u \right) = h_T \left(
     \mu_{\mathrm{SRB}}^- \right) - \mu_{\mathrm{SRB}}^- \left( u \right) = 0 \\  
   \frac{\partial Q}{\partial q} \left( 1, 0, {\gamma_c^-} \right) = 
     \mu_{\mathrm{SRB}}^- \left( \varphi \right) - {\gamma_c^-} = 0
     \end{array}
     \right. 
  \end{equation}
 so that
 \begin{equation}
 \left( 1, 0, {\gamma_c^-} \right) = \left( D ( {\gamma_c^-}), q({\gamma_c^-}), {\gamma_c^-} \right).	\label{eq:D_stabil}
 \end{equation}	
	In view of (\ref{eq:D_deriv}), we have also
	\begin{equation}
	D' \left( {\gamma_c^-} \right) = 0	\fs 	\label{eq:D_deriv_zero}
	\end{equation}
  
  Finally, differentiating (\ref{eq:D_deriv}) we obtain
  \begin{equation}
	D''(t)=\frac{q'(t)}{\frac{\partial Q}{\partial \delta}}
	-q(t)\,\frac{D'(t)\frac{\partial^2Q}{\partial\delta^2}+q'(t)\frac{\partial^2Q}{\partial\delta \partial q}+\frac{\partial^2Q}{\partial\delta \partial t}}{\left(\frac{\partial Q}{\partial\delta}\right)^2}
  \end{equation}
  where all partial derivatives are evaluated at $(D(t),q(t),t)$.
  Observing that $\frac{\partial^2Q}{\partial\delta \partial t}=0$ and substituting (\ref{eq:D_deriv}) we obtain
  \begin{equation}
  D''( t ) = \frac{1}{\frac{\partial Q}{\partial \delta}
     } \left( q'( t) - D'( t ) \left( D'(t)\,\frac{\partial^2Q}{\partial \delta ^2 } + q'( t)    \frac{\partial^2 Q}{\partial \delta \partial q}\right) \right) 	\label{eq:D_2deriv}
  \end{equation}
  with all partial derivatives again evaluated at $(D(t),q(t),t)$.
  As we assume that $ \gamma _{ \min } < \gamma _{ \max } $,
the map $ q \mapsto Q ( \delta , q , t ) $ is strictly convex (c.f. Remark \ref{remark:Anosov}).
  Hence, we obtain from (\ref{eq:q_deriv}) and (\ref{eq:D_2deriv}) together with (\ref{eq:D_stabil}) and (\ref{eq:D_deriv_zero}),
  \begin{equation}
    q' \left( {\gamma_c^-} \right) = \frac{1}{\frac{\partial^2 Q}{\partial q^2}
    \left( 1, 0, \gamma_c^- \right)} > 0 \hspace{1em} \text{\tmop{and}} \hspace{1em} D''
    \left( {\gamma_c^-} \right) = \frac{q' \left( {\gamma_c^-} \right)}{\frac{\partial
    Q}{\partial \delta} \left( 1, 0,\gamma_c^- \right)} < 0 \fs  \label{eq:qD}
  \end{equation}
  
  Now consider the curve $C : \left( \gamma_{\min}, \gamma_{\max} \right)
  \rightarrow \left[ 0, 1 \right] \times \mathbbm{R}$ defined by $C( t) =\left( D( t), q( t ) \right)$. In view of
  (\ref{eq:D_stabil}) and the first inequality of (\ref{eq:qD}) this curve
  runs across the point $\left( 1, 0 \right)$ in $t = {\gamma_c^-}$ from left to right.
  As $\tmop{sign} \left( D'(t)\right)= - \tmop{sign} \left( q ( t )\right)$ by (\ref{eq:D_deriv}), $D'(t)$ can change its sign only when $q(t)=0$, i.e. only if $t=\gamma_c^-$ and hence $D(t)=1$. Therefore $D'(t)>0$ for $t<\gamma_c^-$ and $D'(t)<0$ for $t>\gamma_c^-$.
\end{proof}

Recall that for $ \nu \in \mathcal{P} ( \Theta ) $ the upper and lower point-wise dimensions at $ \theta \in \Theta $ are defined by
\begin{equation}
	\overline{d} _\nu ( \theta ) \assign \limsup _{ r \rightarrow 0 } \frac{ \log \nu ( B ( \theta , r ) ) }{ \log r }
\quad
\mbox{and}
\quad
	\underline{d} _\nu ( \theta ) \assign \liminf _{ r \rightarrow 0 } \frac{ \log \nu ( B ( \theta , r ) ) }{ \log r } \fs 
\end{equation}
If $ \overline{d} _\nu ( \theta ) = \underline{d} _\nu ( \theta ) $, we denote this with $ d _\nu ( \theta ) $.

To formulate the next lemma we define the sets
\begin{equation}
	N_t^{\geqslant} \assign \left\{ \theta \in \Theta : \Gamma \left( \theta \right)
   \geqslant t \right\} \quad 
   \mbox{and}	\quad
   N_t^{\leqslant} \assign \left\{ \theta \in \Theta : \limsup _{n \rightarrow \infty} \Gamma ^{(n)} \left( \theta \right) \leqslant t \right\}   .
\end{equation}
   
\begin{lemma}
  \label{lem:D_point_dim}
Let $t\in(\gamma_{\min},\gamma_{\max})$.  
  There is $\nu_t \in \mathcal{P} \left( \Theta \right)$ (which is in general not $T$-invariant) such that
  $\nu_t \left( S_t' \right) = 1$ with the following properties:
\begin{enumerate}
 \item $ d_{\nu_t} \left( \theta \right) = D \left( t \right) + 1 $ for $ \nu _t $-a.e. $ \theta \in \Theta $,
 \item $ \overline{d}_{\nu_t} \left( \theta \right) \leqslant D \left( t \right) + 1 $ for each $ \theta \in S _t ' $,
 \item $ \underline{d}_{\nu_t} \left( \theta \right) \leqslant D \left( t \right) + 1 $ for each $ \theta \in R _t ' $,
 \item $ \overline{d}_{\nu_t} \left( \theta \right) \leqslant D \left( t \right) + 1 $ for each $ \theta \in N _t ^\leqslant $
 provided that $ t<\gamma _c^-$, and
 \item $ \overline{d}_{\nu_t} \left( \theta \right) \leqslant D \left( t \right) + 1 $ for each $ \theta \in N _t ^\geqslant $ provided that $ t> \gamma _c^-$.
\end{enumerate}
\end{lemma}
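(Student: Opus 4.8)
\emph{Strategy.} The plan is to realise $\nu_t$ as a measure that is arc-length along unstable manifolds and ``thermodynamic'' transverse to them, and to deduce all five point-wise dimension estimates from one Gibbs bound. Fix a Markov partition $\mathcal{R}$ of small diameter for $T$. Since $\Gamma$ — and hence $S_t'$, $R_t'$, $N_t^{\leqslant}$ and $N_t^{\geqslant}$ — is constant along unstable manifolds, pass to the factor of $\Theta$ by the unstable foliation; on it $T$ induces an expanding map conjugate to a one-sided subshift of finite type, whose transverse geometric potential $\log\|dT|E^s\|$ is Hölder with $-c_2\leqslant\log\|dT|E^s\|\leqslant-c_1<0$. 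Let $\mu_t$ and $q(t)$ be the equilibrium state and the analytic function from Lemmas~\ref{lemma:dim_D+1} and \ref{D_analytic}, put $\phi_t\assign q(t)(\log g-t)+D(t)\log\|dT|E^s\|$, and recall $P(\phi_t)=Q(D(t),q(t),t)=0$ with $\mu_t$ the equilibrium state of $\phi_t$. Its image $\bar\mu_t$ on the factor is the Gibbs measure of $\phi_t$; writing $C_m(\theta)$ for the set of points with the same $\mathcal{R}$-itinerary as $\theta$ along $\theta,T^{-1}\theta,\dots,T^{-m}\theta$ and $\delta_m(\theta)$ for its transverse diameter, the Gibbs property and bounded distortion (both valid for one-sided cylinders) give, with constants uniform in $m$,
\begin{equation*}
\bar\mu_t\bigl(C_m(\theta)\bigr)\;\asymp\;\exp\!\Bigl(\textstyle\sum_{k=1}^{m}\phi_t(T^{-k}\theta)\Bigr)\;=\;\delta_m(\theta)^{\,D(t)}\,e^{\,q(t)\,m\,(\Gamma^{(m)}(\theta)-t)},\qquad m\asymp\log\tfrac{1}{\delta_m(\theta)},
\end{equation*}
because $\delta_m(\theta)\asymp\exp\bigl(\sum_{k=1}^m\log\|dT|E^s\|(T^{-k}\theta)\bigr)$ and $\log\|dT|E^s\|$ is bounded away from $0$ and $-\infty$.

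\emph{Construction of $\nu_t$.} On each element of $\mathcal{R}$ take the measure whose conditionals on local unstable plaques are normalised arc-length and whose transverse quotient is $\bar\mu_t$, and glue these pieces with positive weights into one probability measure $\nu_t$ on $\Theta$ of full support. Replacing the unstable conditionals of $\mu_t$ by arc-length is exactly what destroys $T$-invariance while raising the unstable point-wise dimension to its maximal value $1$. Since $\mu_t(S_t)=1$ (Lemma~\ref{D_analytic}), $\mu_t(S_t\setminus S_t')=0$ (Corollary~\ref{coro:S_ident}) and $S_t'$ is a union of entire unstable manifolds, its transverse section has full $\bar\mu_t$-measure, so $\nu_t(S_t')=1$.

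\emph{The master estimate and the five claims.} For $\theta\in\Theta$ and small $r>0$ the local product structure of the surface Anosov diffeomorphism gives $\nu_t(B(\theta,r))\asymp r\cdot\bar\mu_t(B^s(\theta,r))$, and $B^s(\theta,r)$ is squeezed between two cylinders of depth $\asymp\log\frac1r$; together with the previous display this yields
\begin{equation*}
\nu_t\bigl(B(\theta,r)\bigr)\;\asymp\;r^{\,1+D(t)}\,e^{\,q(t)\,m(r,\theta)\,(\Gamma^{(m(r,\theta))}(\theta)-t)},\qquad m(r,\theta)\asymp\log\tfrac1r .
\end{equation*}
Now insert the behaviour of $\Gamma^{(m)}(\theta)$. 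If $\theta\in S_t'$ then $\Gamma^{(m)}(\theta)\to t$, so the exponential factor is $r^{o(1)}$ for all small $r$ and $\overline d_{\nu_t}(\theta)\leqslant D(t)+1$; for $\nu_t$-a.e.\ $\theta$ one also has $\Gamma^{(m)}(\theta)\to t$ by Birkhoff's theorem for $T^{-1}$ under $\mu_t$, so the display sandwiched from both sides gives $d_{\nu_t}(\theta)=D(t)+1$. If $\theta\in R_t'$ then $\Gamma^{(n_k)}(\theta)\to t$ along $n_k\to\infty$, and evaluating the display along $r_k\asymp\delta_{n_k}(\theta)\to0$ gives $\underline d_{\nu_t}(\theta)\leqslant D(t)+1$. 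Finally, $\operatorname{sign} q(t)=-\operatorname{sign} D'(t)$ by Lemma~\ref{D_analytic}, hence $q(t)<0$ for $t<\gamma_c^-$ and $q(t)>0$ for $t>\gamma_c^-$: if $t<\gamma_c^-$ and $\theta\in N_t^{\leqslant}$ then $\Gamma^{(m)}(\theta)\leqslant t+\varepsilon$ for large $m$, whence $q(t)\,m\,(\Gamma^{(m)}(\theta)-t)\geqslant-|q(t)|\varepsilon m\gtrsim-\varepsilon\log\frac1r$ and so $\overline d_{\nu_t}(\theta)\leqslant D(t)+1+C\varepsilon$ for every $\varepsilon>0$; if $t>\gamma_c^-$ and $\theta\in N_t^{\geqslant}$ the symmetric estimate with $\Gamma^{(m)}(\theta)\geqslant t-\varepsilon$ and $q(t)>0$ gives the same bound.

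\emph{Main difficulty.} The genuinely technical part is the passage between the smooth Anosov surface diffeomorphism and its symbolic/transverse model: the local product estimate $\nu_t(B(\theta,r))\asymp r\cdot\bar\mu_t(B^s(\theta,r))$, the comparison of metric balls with Markov cylinders with constants uniform in the depth, the identification of $\bar\mu_t$ with a one-sided Gibbs measure of zero pressure, and the gluing of the local pieces across the boundaries of $\mathcal{R}$ into a single full-support probability measure without spoiling any estimate. All of this is classical (\cite{Bowen,Barreira}), but the Hölder-only regularity of the invariant foliations and the boundary behaviour of the partition require the customary care.
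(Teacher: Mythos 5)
Correct, and essentially the same argument as the paper: the paper obtains claims (1)--(3) by invoking \cite[Theorem 12.3.1]{Barreira} through a translation dictionary and then modifies the proof of \cite[Lemma 12.3.6]{Barreira} for (3)--(5), and your construction of $\nu_t$ (arc-length conditionals on unstable leaves, Gibbs measure of $q(t)(\log g - t) + D(t)\log\|dT|E^s\|$ transversally) together with the master estimate on $\nu_t(B(\theta,r))$ is precisely what lies inside those cited proofs. Your handling of the sign of $q(t)$ via $\operatorname{sign}q(t) = -\operatorname{sign}D'(t)$ from Lemma~\ref{D_analytic}, and of the subsequence $n_k$ for $\theta\in R_t'$, matches the paper's modifications exactly.
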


\begin{proof}[Proof of Lemma \ref{lem:D_point_dim}]
We apply the very general result \cite[Theorem 12.3.1]{Barreira} with slight modifications. Here is a dictionary between our notation and the notation from \cite{Barreira}:
\begin{displaymath}
\begin{array}{c|c}
\text{Notation from \cite{Barreira}}&\text{Our notation}\\
\hline
\Lambda,\ f,\ \mathcal{M}&\Theta,\ T,\ \mathcal{P}_T(\Theta)\\
\kappa&1\\
\mathcal{P}^+ ( \, \cdot \, ) ,\  \mathcal{P}^- ( \, \cdot \, )  &  1 ,\ \gamma ( \cdot ) \\
\Phi^+,\ \Phi^-,\ \Psi^+,\ \Psi^- & 1 ,\ \log g ,\ 1,\ 1\\
\alpha,\ \beta & 1,\ t\\
K_\alpha^+,\ K_\beta^-& \Theta,\ S_t'\\
d^+,\ d^-&1,\ D(t)\\
q^+,\ q^-& 1,\ q(t)
\end{array}
\end{displaymath}
We note that the assumption $\alpha\in\operatorname{int}\mathcal{P}^+(\mathcal{M})$ of \cite[Theorem 12.3.1]{Barreira} is not satisfied in our setting, as $\mathcal{P}^+(\mathcal{M})=\{1\}=\{\alpha\}$. This assumption is only used to assure the existence of $q^+$ with the properties claimed in
\cite[Lemma 12.3.3]{Barreira}; but these properties are trivially satisfied in our case.

Now we can conclude immediately from \cite[Theorem 12.3.1]{Barreira} and Lemma~\ref{lemma:dim_D+1} that
\begin{equation}
\nu _t (S_t')=\nu _t (K_\alpha^+\cap K_\beta^-)=1 \com
\end{equation}
and
\begin{equation}
d_{\nu_t} \left( \theta \right) = 
D \left( t \right) + 1\quad\text{for $ \nu _t $-a.e. $ \theta \in \Theta $} \com
\end{equation}
\begin{equation}
 \overline{d}_{\nu_t} \left( \theta \right) \leqslant D \left( t \right) + 1 
	\quad\text{for each $ \theta \in S _t ' $} \com
\end{equation}
as
\begin{equation}
\dim_H(K_\alpha^+)+\dim_H(K_\beta^-)-\dim_H(\Lambda)=
\dim_H(S_t')=D \left( t \right) + 1 \fs 
\end{equation}
For the remaining proofs of assertions (3) to (5) we must modify some arguments from \cite{Barreira} slightly:

\noindent To (3) : We modify the proof of \cite[Lemma 12.3.6]{Barreira} as follows.
Since inequality (12.15) does no longer hold for all large $n$'s, but still for infinitely many $n$'s, several inequalities after the estimate (12.17) hold only for $r$'s such that $m \left( \theta, r \right)$ defined in (12.12) satisfies (12.15) in place of $ n $.
Choose a null sequence $ ( r_k ) _k$ of $r$'s in the above manner.
Then we obtain the last two inequalities in the proof of Lemma 12.3.6 where both limits superior are replaced by the limits inferior.

\noindent To (4) : We
  modify the proof of \cite[Lemma 12.3.6]{Barreira} again. Let $t \in \left(\gamma_{\min},{\gamma_c^-}\right)$ such that $q(t)<0$. Then,
  for $\delta > 0$ and $\chi(\omega) \in N_t^{\leqslant}$, there is a $r (\omega) \in \N $
  such that for $n > r \left( \omega \right) $
  \begin{equation}\label{eq:To(4)}
  q(t) \cdot \sum_{k = 0}^n \left( \Phi ^s \left( \left(
     \sigma^- \right)^k \omega ^- \right) - t \right) > 
     - \delta n | q(t)| . 
  \end{equation}
  Since we can obtain (12.17) from this estimate instead of (12.15), the
  proof is finished.
  
\noindent To (5) : Let $t \in \left( {\gamma_c^-}, \gamma_{\max} \right)$ such that $q(t)>0$. As in the proof of (4), 
 for $\delta > 0$ and $\chi \left(
  \omega \right) \in N_t^{\geqslant}$ there is a $r \left( \omega \right) \in \N $
  such that (\ref{eq:To(4)}) holds for $n > r(\omega)$, and again we
 can obtain (12.17) from this estimate instead of (12.15).
\end{proof}

\begin{lemma}
  \label{S'_D}For $t \in \left( \gamma_{\min}, \gamma_{\max} \right)$ we have $ \dim_H \left( S_t' \right) = \dim_H \left( R_t' \right) = \dim_P \left(
     S_t' \right) = D \left( t \right) + 1$.
Furthermore, 
	$\dim_H(N_t^\leqslant)=D(t)+1$ for $t\in\left(\gamma_{\min},\gamma_c^-\right]$ and  
  $\dim_P \left( N_t^{\geqslant} \right) = D \left( t \right) +
  1$ for $t \in \left[ {\gamma_c^-}, \gamma_{\max} \right)$.
\end{lemma}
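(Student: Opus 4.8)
The plan is to read off every equality in the statement from the pointwise-dimension estimates of Lemma~\ref{lem:D_point_dim}, using only the standard comparison principles between the pointwise dimensions of a measure and the Hausdorff and packing dimensions of sets (see e.g.\ \cite{Falconer2,Barreira}): for a finite Borel measure $\nu$ on $\Theta$ and $A\subseteq\Theta$ one has (a)~$\dim_H(A)\leqslant s$ whenever $\underline d_\nu(\theta)\leqslant s$ for every $\theta\in A$, and $\dim_H(A)\geqslant s$ whenever $\nu(A)>0$ and $\underline d_\nu(\theta)\geqslant s$ for every $\theta\in A$; and (b)~$\dim_P(A)\leqslant s$ whenever $\overline d_\nu(\theta)\leqslant s$ for every $\theta\in A$. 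Throughout I write $\nu_t$ for the measure furnished by Lemma~\ref{lem:D_point_dim} and abbreviate $s_t\assign D(t)+1$. I will also use freely that $\dim_H\leqslant\dim_P$, that both dimensions are monotone under inclusion, and the elementary inclusions $S_t'\subseteq R_t'$, $S_t'\subseteq S_t\subseteq N_t^\geqslant$ and $S_t'\subseteq N_t^\leqslant$, all immediate from the definitions together with Theorem~\ref{theo:main-Lambda}\ref{theo:main-Lambda-c}).

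The argument then runs through the following cases. First, $\dim_H(S_t')=s_t$ is exactly Lemma~\ref{lemma:dim_D+1} (alternatively it is recovered from assertions~(1) and~(2) of Lemma~\ref{lem:D_point_dim}: (1) yields $\underline d_{\nu_t}=s_t$ on a set of full $\nu_t$-measure inside $S_t'$, so $\dim_H(S_t')\geqslant s_t$ by~(a), while~(2) gives $\underline d_{\nu_t}\leqslant\overline d_{\nu_t}\leqslant s_t$ on $S_t'$, so $\dim_H(S_t')\leqslant s_t$). For the packing dimension of $S_t'$, assertion~(2) with principle~(b) gives $\dim_P(S_t')\leqslant s_t$, and the reverse inequality is just $\dim_P(S_t')\geqslant\dim_H(S_t')=s_t$. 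For $R_t'$, assertion~(3) with principle~(a) gives $\dim_H(R_t')\leqslant s_t$, and $S_t'\subseteq R_t'$ gives the reverse. For $t\in(\gamma_{\min},\gamma_c^-)$, assertion~(4) together with $\underline d_{\nu_t}\leqslant\overline d_{\nu_t}$ and principle~(a) gives $\dim_H(N_t^\leqslant)\leqslant s_t$, while $S_t'\subseteq N_t^\leqslant$ gives the reverse; symmetrically, for $t\in(\gamma_c^-,\gamma_{\max})$, assertion~(5) and principle~(b) give $\dim_P(N_t^\geqslant)\leqslant s_t$, and $S_t'\subseteq N_t^\geqslant$ together with the already established $\dim_P(S_t')=s_t$ gives the reverse. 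It remains to treat the endpoint $t=\gamma_c^-$: there $s_{\gamma_c^-}=D(\gamma_c^-)+1=2$ by Lemma~\ref{D_analytic}, so $\dim_H(N_{\gamma_c^-}^\leqslant)\leqslant 2$ and $\dim_P(N_{\gamma_c^-}^\geqslant)\leqslant 2$ hold trivially because $\dim_H\Theta=\dim_P\Theta=2$, and the matching lower bounds follow from $S_{\gamma_c^-}'\subseteq N_{\gamma_c^-}^\leqslant\cap N_{\gamma_c^-}^\geqslant$ via the first part of the lemma.

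I do not expect a substantial obstacle here: all of the hard content — the thermodynamic identity defining $D$ and the construction of the (generally non-invariant) measure $\nu_t$ together with its pointwise dimension properties — is already packaged in Lemmas~\ref{lemma:dim_D+1} and~\ref{lem:D_point_dim}. The only things that genuinely need care are bookkeeping: pairing each assertion of Lemma~\ref{lem:D_point_dim} with the correct comparison principle (the lower pointwise dimension controls $\dim_H$ from above, the upper one controls $\dim_P$ from above, and $\nu_t$-almost-everywhere coincidence of the two controls $\dim_H$ from below), verifying the elementary set inclusions listed above, and isolating the endpoint $t=\gamma_c^-$, where assertions~(4) and~(5) of Lemma~\ref{lem:D_point_dim} are no longer available and one falls back on the trivial bound $\dim\Theta=2$ together with $D(\gamma_c^-)=1$.
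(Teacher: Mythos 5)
Your proposal is correct and is essentially the same argument the paper has in mind: the paper's proof is a one-line reference to Lemma~\ref{lem:D_point_dim} together with \cite[Theorem 2.1.5]{Barreira} and \cite[Proposition 2.3]{Falconer2}, and you have simply unpacked exactly those comparison principles between pointwise dimensions and Hausdorff/packing dimensions, paired correctly with assertions (1)--(5) of Lemma~\ref{lem:D_point_dim} and the obvious inclusions. The separate treatment of $t=\gamma_c^-$ via $D(\gamma_c^-)+1=2=\dim\Theta$ is a sensible explicit step that the paper leaves implicit.
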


\begin{proof}[Proof of Lemma \ref{S'_D}]
  This follows from Lemma \ref{lem:D_point_dim} with \cite[Theorem 2.1.5]{Barreira} and \cite[Proposition 2.3]{Falconer2}.
\end{proof}

\begin{remark}
Roughly speaking, each of the sets $ S _t ' $ and $ R _t ' $ is locally the product of a $ D ( t ) $-dimensional subset of the local stable manifold and the complete local unstable manifold.
\end{remark}

\begin{proof}[Proof of the Theorem \ref{theo:D_dim}]
  As $S_t' \subseteq S_t \subseteq R_t'$, we obtain $\dim_H \left( S_t \right) = D
  \left( t \right) + 1$ for $t \in \left( \gamma_{\min}, \gamma_{\max}
  \right)$ from Lemma \ref{S'_D}. 
  For the remaining arguments observe also the monotonicity properties of $t\mapsto D(t)$ from Lemma~\ref{D_analytic}.
  
  Let $t \in \left( \gamma_{\min}, {\gamma_c^-} \right]$.
  Firstly, $D(t) + 1 = \lim_{t' \nearrow t} D( t') + 1 = \lim_{t'\nearrow t} \dim_H( S_{t'}) \leqslant \dim_H( N_t)$, as $S_{t'} \subseteq N_t$.
  Secondly, from Theorem \ref{theo:main-Lambda} we have $N_t \subseteq \left\{ \Gamma \leqslant t \right\} \subseteq R '_t\cup N ^{\leqslant} _t $.
  Thus, $\dim_H( N_t) \leqslant \max \{ \dim_H( R'_t ) , \dim _H (N ^{\leqslant} _t) \} =  D( t ) + 1$, and it follows that $\dim_H( N_t ) = D( t) + 1$.
  
  Now, let $t \in \left[ {\gamma_c^-}, \gamma_{\max} \right)$. Firstly, $D( t ) + 1 = \lim_{t'' \searrow t} D ( t ) + 1 =
  \lim_{t'' \searrow t} \dim_H( S_{t''} ) \leqslant \dim_H ( \Theta \setminus N_t)$, as $S_{t''} \subseteq \Theta \setminus N_t$.
  Secondly, from Theorem \ref{theo:main-Lambda} we have $\Theta \setminus N_t \subseteq N ^\geqslant _t $.
  Thus, $\dim_H( \Theta \setminus N_t)\leqslant\dim_P( \Theta \setminus N_t ) \leqslant \dim_P( N_t^{\geqslant}) = D( t) + 1$, and it follows that $\dim_H ( \Theta \setminus N_t) = \dim_P( \Theta \setminus N_t) = D( t) + 1$.
\end{proof}

\subsection{Proof of Theorem \ref{theo:A_dim}}

\begin{proof}
\noindent To (2): Let $ t \in[ \gamma _c ^- , \gamma _{\max} ) $. For $ t' \in ( t , \gamma _{\max} ) $ and $ \nu _{t'} \in \mathcal{P} ( \Theta ) $ from Lemma \ref{lem:D_point_dim} it follows $ \nu _{t'} \left( \bigcup _{ k \in \N } \{ \varphi _t > 1/k \} \right) =\nu _{t'} ( \Theta \setminus N _t ) \geqslant \nu _{t'} ( S _{t'} ) = 1 $, whence there is a $ k \in \N $ s.t. $ \nu _{t'} ( \{ \varphi _t > 1 / k \} ) > 0 $.
Denoting the restriction of $ \nu _{t'} $ to this set by $ \tilde{\nu} _{t'} $ we obtain $ \dim _H ( \{ \varphi _t > 1 / k \} ) \geqslant \dim _H ( \tilde{\nu} _{t'} ) \geqslant D ( t' ) + 1 $ by \cite[Theorem 2.1.5]{Barreira}, as $ d_{ \tilde{\nu} _{t'}} = d _{\nu _{t'}} = D(t') + 1 $ holds $ \tilde{\nu} _{t'}$-a.e..
Therefore, $\dim _H ( \{ \varphi _t > 1 / k \} ) \geqslant \lim _{t' \searrow t} D ( t' ) + 1 = D( t ) + 1 $, and it follows from the first product rule of \cite[Theorem 3]{Tricot} that
\[
	\dim _H ( \mathcal{A} _t ) \geqslant \dim _H \left(  \{ \varphi _t > 1 / k \} \times [ 0 , 1/k ] \right) \geqslant \dim _H (  \{ \varphi _t > 1 / k \} ) + \dim _H ( [ 0 , 1/k ] ) \geqslant D ( t ) + 2 .
\]
 On the other hand, Theorem \ref{theo:D_dim} in conjunction with the last product rule of \cite[Theorem 3]{Tricot} implies
\[
	\dim _P ( \mathcal{A} _t ) \leqslant \dim _P \left( \Theta \setminus N _t \times [ 0 , M _t ] \right) \leqslant \dim _P ( \Theta \setminus N _t ) + \dim _P ( [ 0 , M _t ] ) = D ( t ) + 2 ,
\]
so that $ \dim _P ( \mathcal{A} _t ) = \dim _H ( \mathcal{A} _t ) = D ( t ) + 2 $.

\noindent To (1): Let $ t \in ( \gamma _{ \min } , \gamma _c ^- ] $.
From (2) and Theorem \ref{theo:D_dim} it follows $ \dim _P ( \mathcal{A} _t ) \geqslant \dim _H ( \mathcal{A} _t ) \geqslant \dim _H ( \mathcal{A} _{ \gamma _c ^-} ) = D ( \gamma _c ^- ) + 1 = 3 $, which finishes the proof.
\end{proof}

\bibliography{public10}
\bibliographystyle{abbrv}

\end{document}